\theoremstyle{plain}
\newtheorem{thm}{Theorem}
\newtheorem{cor}[thm]{Corollary}
\newtheorem{lemma}[thm]{Lemma}
\newtheorem{prop}[thm]{Proposition}
\theoremstyle{definition}
\newtheorem{example}[thm]{Example}
\newtheorem{remark}[thm]{Remark}
\newtheorem{remk}[thm]{Remark}
\begin{document}

\title[Random Walks in Cones : The case of  nonzero drift]{Random Walks in Cones: The case of nonzero drift}

\author[Duraj]{Jetlir Duraj}

\address{Mathematical Institute, University of Munich, Theresienstrasse 39, D--80333
Munich, Germany}
\email{jetlir.duraj@mathematik.uni-muenchen.de}

\begin{abstract}
We consider multidimensional discrete valued random walks with nonzero drift killed when leaving general cones of the euclidian space. We find the asymptotics for the exit time from the cone and study weak convergence of the process conditioned on not leaving the cone.  We get quasistationarity of its limiting distribution. Finally we construct a version of the random walk conditioned to never leave the cone.
\end{abstract}


\keywords{random walk, exit time, cones, conditioned process, quasistationary distribution  }

\maketitle
\section{Introduction, results and discussion}
\subsection{Motivation}
We want to study the multidimensional counterpart of the following one dimensional problem:
\newline \indent
Let $S(n)$ be a real valued random walk with negative drift started at some $x\in (0,+\infty)$. Find the asymptotics of the exit time $\tau_x=\inf\{n\geq 0 : S(n)\leq 0\}$. This is by now a classical result.
For example, in \cite{doney2} the asymptotics is found if the jump of the random walk fulfills the following Cram\'{e}r-type condition :\newline $R(h) = \mathbb{E}[e^{hX}]$ is finite in some $[0, B]$ for $B\leq\infty$, and $0<\lim_{h\uparrow B}\frac{R'(h)}{R(h)}\leq \infty$. The asymptotics is then
\begin{equation}\label{eq:1}
\mathbb{P}(\tau_x>n) \sim V(x)\mu^{-n}n^{-\frac{3}{2}} \text{  as  } n \rightarrow \infty.
\end{equation}
Here, $\mu =\frac{1}{\mathbb{E}[e^{h_0X}]}$ and $h_0$ is the unique solution of $R'(h)=0$ and the assumption on the jump implies in particular that it has a finite second moment. For two real-valed sequences, by the notation $a(n)\sim b(n)$, as $n\rightarrow\infty$, we mean the property $\lim_{n\rightarrow\infty}\frac{a(n)}{b(n)}=1$.
\newline \indent Note that the positive open real halfline is a cone. Our main aim in this paper is to find an analogue to \eqref{eq:1} for random walks in dimensions greater than one, killed when leaving cones in the respective euclidian space and to derive some weak convergence results for the conditioned process. 
\newline  Even though the idea of a Cram\'{e}r condition and use of an exponential change of measure will be helpful for the multidimensional case as well, methodologically the multidimensional problem is different from the one dimensional case. Therefore we first briefly recall the main idea of the study of latter problem, as it is done in \cite{doney2}
\newline Its study is facilitated by the obvious relation $\mathbb{P}(\tau_x>n)=\mathbb{P}(L_n\geq -x)$ with $L_n = \min_{i=1..n} S(i)$. This makes possible the use of the following classical relations (see also the similar equations (2.5) and (2.6) in \cite{doney2})
\begin{equation}\label{eq:2}
\sum_{n\geq 0}s^n \mathbb{E}[e^{sL_n}] = \exp\{\sum_{n\geq 1} s^n a_n(s)\}
\end{equation}
and
\begin{equation}\label{eq:3}
\sum_{n\geq 0}s^n \mathbb{P}(\tau_0 >n) = \exp\{\sum_{n\geq 1} s^n a_n\},
\end{equation}
where $a_n(t) = \frac{1}{n}\{\mathbb{E}[e^{tS(n)}, S(n) < 0]+ \mathbb{P}( S_n \geq 0 )\}$ and $a_n =\frac{1}{n} \mathbb{P}(S_n \geq 0)$. Then getting the asymptotics of $a_n(t)$ and $a_n$ yields the asymptotics of the exit time. To this aim one makes use of a change of measure, for example for $a_n$ we have
\begin{align*}
a_n &= \frac{1}{n}\int_0^{\infty}\mathbb{P}(S_n \in dx) \\
&= \frac{1}{n}\int_0^{\infty}e^{-hx}(\mathbb{E}[e^{hx}])^n\mathbb{P}(\hat{S}_n\in dx) \\ &= \frac{(\mathbb{E}[e^{hx}])^n}{n}\mathbb{E}[e^{-\hat{S}_n}, \hat{S}_n \geq 0].
\end{align*}
Here $\hat{S}_n$ is the driftless random walk gained from an exponential change of measure of $X$ through the density $\frac{e^{hx}}{\mathbb{E}[e^{hx}]}$. It is then easy to see, for example if the random walk is discrete by expanding and using a local limit theorem, that the expectation in the last line is asymptotically $\frac{const}{\sqrt{n}}$.
\newline \indent As already mentioned, there is no hope of some similarly helpful relation as \eqref{eq:2} and \eqref{eq:3} for the multidimensional case. A way of attacking the multidimensional case is supplied by the recent work \cite{dw1}.  In it the authors study the asymptotics of the exit time from a general multidimensional cone for the case of driftless random walks. They use then the asymptotics of the exit time and several sharp probabilistic inequalities to establish local limit theorems for lattice valued driftless random walks, killed when leaving the cone. One could use some of their results after reducing the case of a nonzero drift to that of the zero drift. For this we impose a Cram\'{e}r condition and use an exponential change of measure to turn the nonzero drift random walk into a driftless one already at the beginning. Nevertheless, it turns out that one has to refine and specialize some crucial estimate from the driftless case to accomplish the proof for the nonzero drift case. This refinement is the hardest part of the proof.
\newline \indent The paper is organized as follows. In the next section we introduce the setting, discuss the assumptions made and formally state the results. In the second part of the paper we provide the proofs for our results.

\subsection{Assumptions and Statement of Results}\label{subsec:1.2}

Let $S(n)$ be a $d$-dimensional random walk, $d \geq 2$. Its jumps are i.i.d. copies of some random variable \newline $X = (X_1,X_2,...X_d)$ which takes value in the euclidian lattice $\mathbb{Z}^d$.
\newline
We assume 
\newline
\newline
\textbf{Assumption 1 (Cram\'{e}r condition) :} The set \newline $\Omega 
= \{h \in \mathbb{R}^d : R(h) = \mathbb{E}[e^{h\cdot X}]< \infty\}
 $ has a nonempty interior containing 0 and there exists some nonzero $h$ in the interior such that $\nabla R(h) = \mathbb{E}[Xe^{h\cdot X}] = 0$. 
\newline \newline
\indent We also assume that the random walk is truly $d$-dimensional, i.e. it does not live on a hyperplane of the euclidian space. Thus we impose \newline
\textbf{Assumption 2 (Non Collinearity) :} For every nonzero $c \in \mathbb{R}^d$ we have \newline $\mathbb{P}(c\cdot X = 0)<1$. \newline \newline 
\indent We also assume that the random walk is strongly aperiodic.
\newline \textbf{Assumption 3 (Strong aperiodicity):} $X$ fulfills the following condition:
\newline\newline \indent 
For every $x \in \mathbb{Z}^d$ the smallest subgroup of $\mathbb{Z}^d$ containing 
\begin{equation*}
\{y : y= x+z \text{ with some } z \text{ s.t. } \mathbb{P}(X = z)>0\}
\end{equation*}
is the whole group.
\newline This is just a technical assumption and we will see that it is dispensable for the actual asymptotics of the exit time.\newline

Note that Assumption 1 implies that $c: = R(h)$ is smaller than 1 and that \newline $\mathbb{E}[X] = \nabla R(0)$ is nonzero, since the function $R(h)$ is strictly convex and $C^{\infty}$ in the interior of $\Omega$. Thus we are in the nonzero drift case. Also, due to convexity of $\Omega$ and Assumption 1, it follows by a Taylor expansion of $R(h)$ around $0$ that $\mathbb{E}[X]\cdot h < 0$.
\newline \indent Define $c = \mathbb{E}[e^{h\cdot X}]$ and let $\tilde{X}$ be a random variable with density
\begin{equation}\label{eq:measurechange}
\mathbb{P}(\tilde{X}\in dz) = \frac{1}{c} e^{h\cdot z}\mathbb{P}(X\in dz),
\end{equation}
defined on the same probability space as $S(n)$ is. 
As a consequence of the Cram\'{e}r assumption we know its associated random walk $\tilde{S}(n) = \sum_{i=1}^n \tilde{X}(i)$ is driftless. Clearly, the non-collinearity assumption holds for $\tilde{X}$ again, since we are dealing with an equivalent change of measure. This implies that $\mathbb{E}\left[\tilde{X}\cdot\tilde{X}^t\right]$ is a positive definite matrix. This ensures the existence of an invertible $d\times d$-matrix $M$ such that $\hat{X}=M\tilde{X}$ has $\mathbb{E}\left[\hat{X}\cdot\hat{X}^t\right] = \mathbb{Id} _{d\times d}$, where $\mathbb{Id} _{d\times d}$ is the Identity matrix of dimension $d$. We denote by $\hat{S}(n)$ its corresponding random walk. It has uncorrelated components with zero drift, since also $\mathbb{E}[\hat{X}] = M\mathbb{E}[\tilde{X}]=0$. \newline \indent Due to the Cram\'{e}r condition, we know that $\tilde{X}$ and $\hat{X}$ have all moments. The state space of $\hat{S}(n)$ is $M \mathbb{Z}^d$. It is again strongly aperiodic in its state space. We denote from now on by $\hat{y}$ the vector $M y$ for $y\in \mathbb{Z}^d$. 
For the original random walk we introduce the stopping time
\begin{equation}
\tau_x = \inf\{n\geq 0: x + S(n) \not \in K\}
\end{equation}
and by $\hat{\tau}_{\hat{x}}$ the corresponding stopping time for $\hat{S}(n)$,
\begin{equation}
\hat{\tau}_{\hat{x}} = \inf\{n\geq 0: \hat{x} + \hat{S}(n) \not \in \hat{K}=MK\}.
\end{equation}  
Here $K$ is an open cone containing $h$ of Assumption 1 and $x \in \mathbb{Z}^d$ always. 
\newline
\newline 
\textbf{Assumption 4 (Convexity): } K is convex, that is 
\begin{equation*}
\text{for every } y, z \in K \text{ and } \lambda_1 , \lambda_2 > 0 \text{ the vector } \lambda_1y_1 + \lambda_2y_2 \text{ is in } K
\end{equation*}
This assumption is slightly stronger than the respective assumptions for the driftless case from \cite{dw1} but it is needed in order to be able to prove our results.
\newline \newline \indent
Moreover, our method works only for cones where the series $\sum_{y \in K \cap \mathbb{Z}^d} e^{-h\cdot y}$ is convergent, so that letting $\Sigma := K \cap \mathbb{S}^{d-1}$ we have to impose 

\textbf{Assumption 5:} For every $x \in \partial \Sigma$ we have $|\angle (x,\frac{h}{|h|})| < \frac{\pi}{2}$.\newline
This assumption is not fulfilled for two-dimensional cones of opening bigger or equal to $\pi$, which contain $h$ in its interior. For such cones $h\cdot y \leq 0 $ for some $y \in K\cap \mathbb{Z}^d$ can happen so that $\sum_{y \in K \cap \mathbb{Z}^d} e^{-h\cdot y}$ diverges.  \newline \indent For cones fulfilling Assumption 5, it follows that $h\cdot y$ can be bounded from below by a constant times the norms of $h$ and $y$ for $y \in K$. Thus this assumption can be considered as minimal for our method. This assumption and the fact $\mathbb{E}[X]h<0$ imply that the random walks in consideration have a drift which points outside of the cone.\newline\indent We can not use our method to get the exact asymptotic of the probabilities of a two dimensional random walks conditioned to stay in a half-plane. Nevertheless, under some conditions, we can use \eqref{eq:1} to get the tail asymptotics of the exit time. If the cone has the form $K = \{x\in \mathbb{R}^d|a\cdot x >0\}$ for some nonzero $a\in \mathbb{R}^d$, the jump $X$ has $a\cdot\mathbb{E}[X]<0$, there exist some $B>0$ such that $\mathbb{E}[e^{ah\cdot X}]<\infty$ for $h\in[0,B]$, and finally $\lim_{h\uparrow B}\frac{\mathbb{E}[a\cdot Xe^{ha\cdot X}]}{\mathbb{E}[e^{ha\cdot X}]}>0$, then we are precisely in the conditions of \cite{doney2} for the random walk with jump $a\cdot X$ and can use \eqref{eq:1} to get the asymptotics of the exit time.
\newline \newline As a last restriction, we have to impose some additional regularity on $\partial K$.
\newline \textbf{Assumption 6 (Regularity):} $\partial\Sigma$ is $C^2$ and the solution of 
\begin{equation*}
\left\{\begin{array}{cl} \Delta u = 0, & \mbox{if }x \in MK,\\u(x) =0, & \mbox{if } x \in \partial MK \end{array}\right.
\end{equation*}
is extendable to the respective solution on a bigger cone, which strictly contains $MK$, i.e. there exists some cone $\tilde{K}$ which strictly contains $MK$ and for which $u$ is extendable to a solution of the Dirichlet problem for the cone $\tilde{K}$. It is also clear, $M$ being invertible i.e. a $C^{\infty}$-diffeomorphism, that $\partial (MK\cap S^{d-1})$ is $C^2$ if $\partial\Sigma$ is $C^2$. We note here, that if the original random walk has indipendent components, then $M$ is a diagonal positive definite matrix so that $MK = K$ and the Assumption 6 is then made on the original cone.
\newline \indent This type of assumption is only slightly stronger than the assumptions needed to study the driftless case (see \cite{dw1}) or the continuous counterpart of the driftless case, i.e. Brownian motion killed when leaving the cone (see \cite{banusmits}). $u$ as above is harmonic for killed Brownian motion when leaving the cone $MK$. It is a homogeneous function of some degree $p\geq 1$: that is, there exists $m$ defined on $\hat{\Sigma} = MK \cap \mathbb{S}^{d-1}$ s.t. $u(x) = |x|^p m ( \frac{x}{|x|} )$.
\newline\indent With an eye on applications, our assumptions are not as restrictive as they seem. For example, Assumption 6 is always fulfilled for cones $K$ with real-analytic $\partial \Sigma$ (see references in \cite{banusmits}). In particular, every two dimensional cone of opening of less or equal to $\frac{\pi}{2}$ fulfills all of Assumptions 4, 5 and 6, since $\partial \Sigma$ contains just two points. Moreover, depending on the specific setting of the original problem one wants to study, it may be possible to make a linear transformation of the cone and the jump and reduce the setting to a random walk living in some cone of the form $\tilde{K}\times\mathbb{R}^{d-r}$ (for some $r\leq d$ and $\tilde{K}$ suitable), whose projection on $\mathbb{R}^r$ fulfills our assumptions.
\newline \indent We also note, that for our proof to go through we could weaken Assumption 1 as our proof only needs the existence of a nonzero $h$ with $R(h)<1$, $\mathbb{E}[Xe^{h\cdot X}] = 0$ and $\mathbb{E}[|X|^{\alpha}e^{h\cdot X}]<\infty$, where $\alpha$ is some suitable real number depending on the transformation $M$. Here, $\alpha$ has to be greater than 2 and equal to $p$ if $p$ itself is greater than 2. We have stated Assumption 1 to show the analogy with the one dimensional problem.
\newline \indent In this setting we are able to prove
\begin{thm}\label{thm:thm1} There exist functions $U$ and $U'$ such that as $n\rightarrow \infty$, for $A\subset K\cap\mathbb{Z}^d$
\begin{equation*}\label{eq:thmeq1}
\mathbb{P}(x + S(n) \in A, \tau_x > n) \sim \rho c^nn^{-(p+\frac{d}{2})}U(x)\sum_{y \in A} U'(y).
\end{equation*} 
In particular,
\begin{equation}\label{eq:assneg}
\mathbb{P}(\tau_x > n) \sim \varrho c^nn^{-(p+\frac{d}{2})}U(x).
\end{equation}
\end{thm}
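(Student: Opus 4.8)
The plan is to reduce the nonzero-drift problem to the driftless one via the exponential change of measure already set up in the introduction, and then to import (a suitably refined version of) the local limit theorem for driftless random walks in cones from \cite{dw1}. Concretely, for $A \subset K \cap \mathbb{Z}^d$ one writes, using the density \eqref{eq:measurechange} applied $n$ times along the path,
\begin{equation*}
\mathbb{P}(x + S(n) \in A,\ \tau_x > n) = c^n\, \mathbb{E}\bigl[e^{-h\cdot \tilde S(n)},\ x + \tilde S(n) \in A,\ \tilde\tau_x > n\bigr],
\end{equation*}
where $\tilde S$ is the driftless walk and $\tilde\tau_x$ its exit time from $K$; the events $\{\tau_x > n\}$ and $\{\tilde\tau_x > n\}$ coincide because the change of measure is equivalent and does not move the cone. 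Pushing forward by the linearizing matrix $M$, the right-hand expectation becomes $c^n\,\mathbb{E}\bigl[e^{-h\cdot M^{-1}\hat S(n)},\, \hat x + \hat S(n) \in MA,\, \hat\tau_{\hat x} > n\bigr]$ for the walk $\hat S$ with identity covariance in the cone $\hat K = MK$. So everything is reduced to the asymptotics of $\mathbb{E}[\,g(\hat S(n))\,;\ \hat x + \hat S(n) \in \hat A,\ \hat\tau_{\hat x} > n]$ for the well-behaved exponentially decaying weight $g(y) = e^{-h\cdot M^{-1}y}$, a quantity for which \cite{dw1} gives a local limit theorem when $g \equiv \mathbf 1$.

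The key steps, in order, would be: (i) record the change-of-measure identity above and the invariance of the exit event; (ii) split the walk at time $n/2$ (or some comparable intermediate time) and use the Markov property to write the weighted quantity as a sum over intermediate positions $z$ of $\mathbb{P}(\hat x + \hat S(n/2) \in dz,\ \hat\tau_{\hat x} > n/2)$ times a tail-type term $\mathbb{E}[e^{-h\cdot M^{-1}(z + \hat S'(n/2))},\ z + \hat S'(n/2) \in \hat A,\ \hat\tau_z > n/2]$; (iii) on the first factor invoke the driftless integral (not local) limit theorem from \cite{dw1}, giving that $\hat S(n/2)$ conditioned to stay in the cone, rescaled by $\sqrt{n}$, converges to the $h$-transform of Brownian motion in $\hat K$ with density proportional to $u$, and that $\mathbb{P}(\hat\tau_{\hat x} > n/2) \sim \varkappa\, n^{-p/2} V(\hat x)$ for the harmonic function $V$ (the discrete analogue of $u$); (iv) on the second factor, because $g$ decays exponentially away from the cone axis direction $h$, the dominant contribution comes from $z$ at distance $O(\sqrt n)$ from the origin along $h/|h|$, and a local limit theorem for the driftless walk killed in the cone — this is where the refinement of \cite{dw1} enters — yields that this term is asymptotically $n^{-d/2}\,\kappa\, u(z/\sqrt n)\,\bigl(\sum_{y\in \hat A} e^{-h\cdot M^{-1}y}\,\text{(discrete harmonic density at }y)\bigr)$ up to the $e^{-h\cdot M^{-1}z}$ factor carried inside; (v) assemble: the $z$-sum turns into a Gaussian-type integral over $\hat K$ of $u(z/\sqrt n)^2 e^{-h\cdot M^{-1}z}$ against the Gaussian kernel, which after the scaling $z = \sqrt n w$ contributes a further $n^{d/2}$ from the volume element and a finite constant $\rho$ from $\int_{\hat K} u(w)^2 e^{-\langle\cdot\rangle} \dots$; collecting the powers $n^{-p/2}\cdot n^{-d/2}\cdot n^{-d/2}\cdot n^{d/2} = n^{-(p+d/2)}$ and pulling out $U(x) := V(\hat x)$ (times the constant from $u$ near $\hat x$) and $U'(y) := e^{-h\cdot M^{-1}\hat y}\times(\text{discrete harmonic density of }\hat S\text{ at }\hat y)$ gives exactly the claimed form; the statement \eqref{eq:assneg} follows by taking $A = K \cap \mathbb{Z}^d$ and checking $\sum_y U'(y) < \infty$, which is precisely what Assumption 5 guarantees via convergence of $\sum_{y\in K\cap\mathbb Z^d} e^{-h\cdot y}$.

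The main obstacle — and, as the introduction itself flags, the hardest part — is step (iv): the local limit theorems proved in \cite{dw1} for driftless walks killed in a cone are not uniform enough, over the starting point $z$ and the endpoint region, to be summed against the exponential weight with the required precision. One needs a version of the estimate $\mathbb{P}(z + \hat S(m) = y,\ \hat\tau_z > m) \approx m^{-(p+d/2)} V(z) V'(y)$ (or its integrated analogue at the intermediate time) that holds uniformly for $z$ ranging over a $\sqrt m$-neighbourhood of the axis and with controlled error terms that decay fast enough in $|z|$ and $|y|$ to make the sum $\sum_z e^{-h\cdot M^{-1}z}(\cdots)$ converge and converge to the right Gaussian integral. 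Establishing this refined, quantitatively uniform local limit theorem — leaning on the convexity Assumption 4 (to get clean boundary estimates and monotonicity of exit probabilities) and on Assumption 5 (to turn the exponential weight into genuine decay transverse to $h$) — is the technical core; the rest is bookkeeping of powers of $n$ and identification of the constants $\rho$, $\varrho$, and the functions $U$, $U'$.
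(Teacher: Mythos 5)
Your opening reduction -- the exponential change of measure giving $\mathbb{P}(x+S(n)\in A,\tau_x>n)=c^n\sum_{y\in A}e^{h\cdot(x-y)}\,d^{(n)}(x,y)$ with $d^{(n)}(x,y)=\mathbb{P}(\hat x+\hat S(n)=\hat y,\hat\tau_{\hat x}>n)$ -- is exactly the paper's first step. But from there the paper does not re-derive a weighted local limit theorem by splitting at $n/2$ and integrating over the intermediate position $z$: it applies the two-sided local limit theorem of \cite{dw1} (their Theorem 6) termwise in the endpoint $y$, and the whole new content is the justification of interchanging $\lim_{n\to\infty}$ with the sum over $y\in A$, which may be infinite (indeed $A=K\cap\mathbb{Z}^d$ is needed for \eqref{eq:assneg}). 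The required domination is a bound on $d^{(n)}(x,y)$ uniform in the endpoint $y$ over the \emph{whole} cone with only polynomial growth in $|y|$: splitting at $m=[\frac n2]$, reversing time, and using Lemma 27 of \cite{dw1} on the $x$-half gives $d^{(n)}(x,y)\le C(x)\,n^{-(\frac p2+\frac d2)}\,\mathbb{P}(\hat\tau'_{\hat y}>n-m)$, and the crux is then the estimate $\mathbb{P}(\tau_y>n)\le C(1+|y|^p)n^{-p/2}$ valid for \emph{every} $y\in K$, including $y$ arbitrarily close to $\partial K$, whereas \cite{dw1} provide it only on $K_{n,\epsilon}$. This extension (Proposition \ref{prop:ndih}, proved via Lemmas \ref{lemma:ndih1} and \ref{lemma:ndih2}) is the actual refinement the introduction alludes to; combined with Assumption 5 it yields $\sum_y e^{-h\cdot y}(1+|y|^p)<\infty$ and dominated convergence finishes the proof. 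Your proposal never isolates this endpoint-uniformity issue: the refined estimate you call for is uniformity over intermediate points $z$ at scale $\sqrt n$, which is essentially what \cite{dw1} already establish in proving their Theorem 6, not uniformity over endpoints $y$ ranging through the unbounded, boundary-touching cone, which is what the sum against $e^{-h\cdot y}$ actually requires.

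As written, your assembly also contains concrete errors. In step (v) you attach the weight $e^{-h\cdot M^{-1}z}$ to the intermediate position $z\asymp\sqrt n$, but in your own decomposition the exponential weight sits only on the endpoint in $\hat A$; if it genuinely appeared at the intermediate point, then after the scaling $z=\sqrt n\,w$ the factor $e^{-\sqrt n\,h\cdot M^{-1}w}$ would make your Gaussian integral tend to zero instead of producing the constant $\rho$. Relatedly, the exponent count $n^{-p/2}\cdot n^{-d/2}\cdot n^{-d/2}\cdot n^{d/2}$ equals $n^{-(\frac p2+\frac d2)}$, not $n^{-(p+\frac d2)}$: you dropped the second $n^{-p/2}$, the survival-type factor carrying $V'(\hat y)$ for the time-reversed walk started at the bounded endpoint. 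These slips are repairable, but repairing them amounts to re-proving Theorem 6 of \cite{dw1} from scratch; the economical route, and the one the paper takes, is to quote that theorem pointwise and to supply the uniform exit-time bound on all of $K$ as the new ingredient enabling dominated convergence.
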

Relation \eqref{eq:assneg} is the multidimensional counterpart of \eqref{eq:1} and $U$, $U'$ are suitable functions defined in the next section. We note that one does not need Assumption 3 for \eqref{eq:assneg}. Indeed, given the original random walk is not strongly aperiodic, we could redefine the probabilities on a suitable subset grid of the euclidian grid and get again \eqref{eq:assneg} through the same calculations. Assumption 3 has been stated for expository reasons.  
The proof of Theorem \ref{thm:thm1} is based on the similar results for the driftless random walk $\hat{S}(n)$, but is not a straightforward application, since the estimates on the tail probability for the exit time in the driftless case are not sharp enough to justify an interchange of limit and sum, which is crucial for the proof of the nonzero drift case. Sharpening this estimate for our setting is the essential in our method. 
\newline \indent Having Theorem \ref{thm:thm1}, we derive from it the asymptotic behavior of the conditioned process. As a first simple Corollary it is immediate to derive
\begin{cor}\label{cor:corollary2}
\begin{equation*}\label{eq:cornjesh}
\frac{\mathbb{P}_x(\tau =n)}{\mathbb{P}_y(\tau =n)} \longrightarrow \frac{U(x)}{U(y)} \quad , \qquad n\rightarrow \infty.
\end{equation*}
\end{cor}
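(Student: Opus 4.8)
The plan is to deduce the statement directly from the tail asymptotics \eqref{eq:assneg} of Theorem~\ref{thm:thm1}, using only the elementary telescoping identity
\begin{equation*}
\mathbb{P}_x(\tau = n) = \mathbb{P}(\tau_x > n-1) - \mathbb{P}(\tau_x > n), \qquad n \geq 1.
\end{equation*}
Substituting \eqref{eq:assneg} into each of the two terms on the right-hand side gives
\begin{equation*}
\mathbb{P}_x(\tau = n) = \varrho\, U(x)\, c^{n-1}(n-1)^{-(p+\frac{d}{2})}(1+o(1)) - \varrho\, U(x)\, c^{n} n^{-(p+\frac{d}{2})}(1+o(1)).
\end{equation*}
Factoring out $\varrho\, U(x)\, c^{n-1} n^{-(p+\frac{d}{2})}$ and observing that $\bigl(1-\tfrac1n\bigr)^{-(p+\frac{d}{2})} \to 1$ as $n \to \infty$, the bracketed factor converges to $1 - c$, which is strictly positive since $c = R(h) < 1$ by Assumption~1. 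Hence
\begin{equation*}
\mathbb{P}_x(\tau = n) \sim \varrho\,(1-c)\, U(x)\, c^{n-1} n^{-(p+\frac{d}{2})}, \qquad n \to \infty.
\end{equation*}

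The same equivalence holds verbatim with $y$ in place of $x$; here one uses that $U$ is strictly positive on $K \cap \mathbb{Z}^d$, a property contained in the construction of $U$ in the next section, so that $\mathbb{P}_y(\tau = n) > 0$ for all large $n$ and the quotient is well defined. Taking the ratio of the two equivalences, every factor cancels except $U(x)$ in the numerator and $U(y)$ in the denominator, which yields $\mathbb{P}_x(\tau=n)/\mathbb{P}_y(\tau=n) \to U(x)/U(y)$.

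The argument is routine; the only point deserving a moment's thought is that a difference of two sequences may be replaced by the difference of their leading terms precisely when those leading terms do not cancel asymptotically. This is ensured here by $c \neq 1$, that is, by the fact that the walk has genuinely nonzero drift pointing out of the cone — so the corollary is a reflection of the negative-drift setting rather than a purely formal consequence of the local behaviour, and this is the only place where that hypothesis is invoked in the proof.
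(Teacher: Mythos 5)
Your proof is correct and takes essentially the same route as the paper: both arguments deduce the claim from the tail asymptotics \eqref{eq:assneg} via the identity $\mathbb{P}_x(\tau=n)=\mathbb{P}(\tau_x>n-1)-\mathbb{P}(\tau_x>n)$ and the fact that $c<1$ prevents cancellation of the leading terms, the paper merely packaging the same computation as the intermediate limit $\mathbb{P}_x(\tau_x=n)/\mathbb{P}_x(\tau_x>n)\to(1-c)/c$ before taking the quotient over $x$ and $y$.
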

The one dimensional version of this limit is found in \cite{doney}.\newline
In \cite{daley} and \cite{iglehart1} the authors find the limit process for the one-dimensional conditioned random walks to be quasistationary. In our setting, as a simple corollary from Theorem \ref{thm:thm1}, one can check that the respective Yaglom limit exists for the multidimensional case, i.e.
\begin{cor}[Yaglom Limit]\label{cor:yaglom} We have for $x \in K \cap \mathbb{Z}^d$, $A\subset K$ and $n\rightarrow \infty$
\begin{equation*}
\mathbb{P}(x + S(n) \in A|\tau_x >n) \longrightarrow \mu (A)= \kappa \sum_{y \in A\cap \mathbb{Z}^d} U'(y).
\end{equation*}
Here, $\kappa > 0$ is the norming constant so that $\mu$ is a probability distribution on $K$. Furthermore, $\mu$ is quasistationary for the conditioned process.
\end{cor}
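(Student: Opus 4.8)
The plan is to divide the two asymptotics supplied by Theorem~\ref{thm:thm1}. Taking $A=\{y\}$ a singleton, the common factor $c^{n}n^{-(p+d/2)}U(x)$ in numerator and denominator cancels and gives
\begin{equation*}
\mathbb{P}\big(x+S(n)=y\mid\tau_x>n\big)=\frac{\mathbb{P}(x+S(n)=y,\tau_x>n)}{\mathbb{P}(\tau_x>n)}\;\longrightarrow\;\frac{\rho}{\varrho}\,U'(y)=:\kappa\,U'(y),\qquad n\to\infty .
\end{equation*}
Since $U'$ is summable on $K\cap\mathbb{Z}^d$ --- a property built into the construction of $U'$ in the next section, and ultimately a consequence of Assumption~5 through the convergence of $\sum_{y\in K\cap\mathbb{Z}^d}e^{-h\cdot y}$ --- the constant $\kappa=\big(\sum_{y\in K\cap\mathbb{Z}^d}U'(y)\big)^{-1}$ is well defined and $\mu(A)=\kappa\sum_{y\in A\cap\mathbb{Z}^d}U'(y)$ is a genuine probability measure on $K\cap\mathbb{Z}^d$. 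Thus $\mathbb{P}(x+S(n)\in\cdot\mid\tau_x>n)$ is a sequence of probability mass functions converging pointwise to the probability mass function $\mu$, and Scheff\'e's lemma upgrades this to convergence in total variation; in particular $\mathbb{P}(x+S(n)\in A\mid\tau_x>n)\to\mu(A)$ for every $A\subset K$, which is the Yaglom limit.

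To obtain quasistationarity, let $Q(z,A)=\mathbb{P}(z+X\in A)$, for $z\in K\cap\mathbb{Z}^d$ and $A\subset K\cap\mathbb{Z}^d$, be the sub-Markov kernel of the walk killed on leaving $K$, and write $\mathbb{P}_\mu$ for the law of the walk started from an independent point distributed as $\mu$. Conditioning on the position at time $n$ gives the exact identity
\begin{equation*}
\mathbb{P}\big(x+S(n+1)\in A,\tau_x>n+1\big)=\sum_{z\in K\cap\mathbb{Z}^d}\mathbb{P}\big(x+S(n)=z,\tau_x>n\big)\,Q(z,A),
\end{equation*}
where we used $A\subset K$. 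Dividing by $\mathbb{P}(\tau_x>n)$ and letting $n\to\infty$: the left-hand side equals $\tfrac{\mathbb{P}(x+S(n+1)\in A,\tau_x>n+1)}{\mathbb{P}(\tau_x>n+1)}\cdot\tfrac{\mathbb{P}(\tau_x>n+1)}{\mathbb{P}(\tau_x>n)}\to\mu(A)\cdot c$ by the Yaglom limit and \eqref{eq:assneg}, while the right-hand side converges to $\sum_{z}\mu(\{z\})Q(z,A)=(\mu Q)(A)$ because $0\le Q(z,A)\le1$ and $\mathbb{P}(x+S(n)\in\cdot\mid\tau_x>n)\to\mu$ in total variation. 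Hence $\mu Q=c\,\mu$ with $c\in(0,1)$. Iterating, $\mathbb{P}_\mu(x+S(n)\in A,\tau>n)=(\mu Q^{n})(A)=c^{n}\mu(A)$ and $\mathbb{P}_\mu(\tau>n)=c^{n}\mu(K\cap\mathbb{Z}^d)=c^{n}$, so $\mathbb{P}_\mu(x+S(n)\in A\mid\tau>n)=\mu(A)$ for every $n$, i.e.\ $\mu$ is quasistationary for the conditioned process.

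The only non-routine point is the exchange of limit and infinite summation, which occurs twice above; each time it is supplied by Scheff\'e's lemma, whose single hypothesis is that the limiting measure $\mu$ has total mass one. So the real substance is the finiteness $\sum_{y\in K\cap\mathbb{Z}^d}U'(y)<\infty$ --- the statement that under the conditioning no mass escapes to infinity --- which is precisely what Assumption~5 is there to ensure and which is delivered by the construction of $U'$ underlying Theorem~\ref{thm:thm1}; everything else is bookkeeping with \eqref{eq:assneg}.
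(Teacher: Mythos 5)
Your proof is correct, and while the Yaglom-limit part is close to the paper's, your treatment of quasistationarity takes a genuinely different route. For the limit itself the paper simply divides the two asymptotic relations of Theorem \ref{thm:thm1} (which is stated for arbitrary $A\subset K\cap\mathbb{Z}^d$, so taking $A$ the whole lattice cone identifies $\varrho=\rho\sum_{y\in K\cap\mathbb{Z}^d}U'(y)$ and the limit for general $A$ drops out at once); you instead work with singletons and upgrade pointwise convergence of the conditional mass functions to total variation convergence via Scheff\'e. Note that the identification $\kappa=\rho/\varrho=\bigl(\sum_{y\in K\cap\mathbb{Z}^d}U'(y)\bigr)^{-1}$, which Scheff\'e's lemma needs (the limit must have total mass one), is itself exactly the $A=K\cap\mathbb{Z}^d$ instance of Theorem \ref{thm:thm1}, so you use the theorem in the same strength as the paper --- but the total variation form of the convergence is a real bonus, and you then put it to work. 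For quasistationarity the paper either invokes the general fact that a Yaglom limit is automatically a QSD (citing \cite{collet}, \cite{meleard}) or, in its self-contained alternative, uses the harmonicity of $V'$ for the time-reversed driftless walk to get the exact one-step identity $\mathbb{P}_{\mu}(S(1)=y,\tau>1)=c\,\mu(y)$ with no limiting argument at all. You instead reprove the general implication within this setting: pass to the limit in the one-step Markov decomposition, using $\mathbb{P}(\tau_x>n+1)/\mathbb{P}(\tau_x>n)\to c$ from \eqref{eq:assneg} on the left and the total variation convergence to interchange limit and sum on the right, obtaining $\mu Q=c\mu$ and then quasistationarity by iteration. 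Your version needs nothing about $V'$ beyond Theorem \ref{thm:thm1} but leans on the Scheff\'e upgrade; the paper's computation is exact and shorter, and makes visible where the eigenvalue $c$ comes from, namely the $c$-harmonicity of $U'$ inherited from the change of measure.
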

Existence of the Yaglom limit is not always given. For example, one sees directly that for driftless random walks $S(n)$ due to Theorem 1 and Theorem 6 in \cite{dw1} we have 
\begin{equation*}
\mathbb{P}(x + S(n) =y|\tau_x >n) \longrightarrow 0, \quad n\rightarrow \infty.
\end{equation*} 
I.e. the Yaglom Limit doesn't exist in this case (it needs to be a genuine probability distribution). For some examples on the fact that even for extensively studied processes like birth-death processes both existence and non-existence can happen, see \cite{vandoorn}. For completeness, we also give in the next section a simple proof of the quasistationarity of $\mu$ by using specific properties of our setting as an alternative to resorting to the general theory of quasistationary distributions. We also show with a simple example that one cannot expect uniqueness of quasistationary distributions in our setting.
\newline \indent In the next section, we prove some simple weak convergence results for the conditioned process. For the one dimensional case, the analysis of such limits in \cite{iglehart1}, \cite{iglehart2} and \cite{daley} is complete. They use the methodology of the one-dimensional problem, which as previously noted, has no bite in the multidimensional setting. Here we prove the following for the exit distribution of the random walk, conditioned on exiting at a specific time: 
\begin{prop}\label{prop:exit}
For $x \in K \cap\mathbb{Z}^d$ and $y \in \mathbb{Z}^d \setminus K$ and $n\rightarrow \infty$
\begin{equation*}\label{eq:dysh}
\mathbb{P}(x + S(\tau_x) = y | \tau_x =n) \longrightarrow \frac{\chi}{1-c} \sum_{z \in K} U'(z) \mathbb{P}(z+S(1)=y),
\end{equation*}
where $\chi$ is a positive constant.
\end{prop}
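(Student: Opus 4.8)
The plan is to write the conditional probability as a ratio and treat numerator and denominator separately:
\[
\mathbb{P}(x+S(\tau_x)=y\mid \tau_x=n)=\frac{\mathbb{P}(x+S(\tau_x)=y,\ \tau_x=n)}{\mathbb{P}(\tau_x=n)}.
\]
For the numerator I would condition on the position at time $n-1$. Since $y\notin K$, the event $\{x+S(n-1)=z,\ \tau_x>n-1\}$ intersected with $\{x+S(n)=y\}$ automatically forces $\tau_x=n$, so by independence of the increment at time $n$,
\[
\mathbb{P}(x+S(\tau_x)=y,\ \tau_x=n)=\sum_{z\in K\cap\mathbb{Z}^d}\mathbb{P}(x+S(n-1)=z,\ \tau_x>n-1)\,\mathbb{P}(X=y-z).
\]

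For the denominator I would use $\mathbb{P}(\tau_x=n)=\mathbb{P}(\tau_x>n-1)-\mathbb{P}(\tau_x>n)$ together with \eqref{eq:assneg}: since $0<c<1$ (Assumption~1) and $(n-1)^{-(p+d/2)}\sim n^{-(p+d/2)}$, the subtracted terms do not cancel to leading order, so
\[
\mathbb{P}(\tau_x=n)\sim \varrho\,(1-c)\,c^{\,n-1}\,n^{-(p+\frac d2)}\,U(x).
\]
Applying Theorem~\ref{thm:thm1} with $A=\{z\}$, for each fixed $z$ the $z$-th summand of the numerator identity, divided by $c^{\,n-1}(n-1)^{-(p+d/2)}$, tends to $\rho\,U(x)\,U'(z)\,\mathbb{P}(X=y-z)$. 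Once this passage to the limit is justified inside the sum, the numerator is
\[
\mathbb{P}(x+S(\tau_x)=y,\ \tau_x=n)\sim \rho\,c^{\,n-1}(n-1)^{-(p+\frac d2)}\,U(x)\sum_{z\in K\cap\mathbb{Z}^d}U'(z)\,\mathbb{P}(X=y-z),
\]
the series being finite since already $\sum_{z\in K\cap\mathbb{Z}^d}U'(z)<\infty$ (as in Corollary~\ref{cor:yaglom}). Dividing the two asymptotics, the factor $c^{\,n-1}n^{-(p+d/2)}U(x)$ cancels and one obtains the claimed limit with $\chi:=\rho/\varrho>0$.

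The main obstacle will be the interchange of $\lim_{n\to\infty}$ with the infinite sum over $z$. The estimate available for free — combining the Cram\'er change of measure \eqref{eq:measurechange} with an ordinary local limit theorem for the centered walk — only bounds $\mathbb{P}(x+S(n-1)=z,\ \tau_x>n-1)$ by $C(x)\,c^{\,n-1}(n-1)^{-d/2}e^{-h\cdot z}$, and after normalization by $(n-1)^{-(p+d/2)}$ this still carries the growing factor $(n-1)^{p}$, hence is useless as a dominating function. What is needed is the \emph{sharp} uniform upper bound for the killed driftless walk, of the form
\[
\mathbb{P}(\hat x+\hat S(m)=\hat z,\ \hat\tau_{\hat x}>m)\ \le\ C\,V(\hat x)\,(1+|\hat z|)^{q}\,m^{-(p+\frac d2)}
\]
uniformly in $m$ and $\hat z$ (with $V$ the positive harmonic function of $\hat S$ and $q$ a fixed exponent) — exactly the estimate that is refined in the proof of Theorem~\ref{thm:thm1}, a weaker form being available from \cite{dw1}. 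Transporting it back through \eqref{eq:measurechange} yields the dominating function $C(x)(1+|z|)^{q}e^{-h\cdot z}$, whose product with $\mathbb{P}(X=y-z)$ is summable over $z\in K\cap\mathbb{Z}^d$ (by Assumption~5, or by the exponential moments of $X$), so that dominated convergence applies. Alternatively, one splits the sum at $|z|\le R$, handles the finite part directly by Theorem~\ref{thm:thm1}, and lets $R\to\infty$ after bounding the tail $|z|>R$ uniformly in $n$ with the same estimate.
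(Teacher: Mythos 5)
Your proposal is correct and follows essentially the same route as the paper: decompose the event $\{x+S(\tau_x)=y,\ \tau_x=n\}$ via the Markov property at time $n-1$ into $\sum_{z}q^{(n-1)}(x,z)\mathbb{P}(z+S(1)=y)$, pass to the limit termwise using Theorem~\ref{thm:thm1}, justify the interchange by the sharpened bound from the proof of Theorem~\ref{thm:thm1} (the paper does exactly your ``alternative'': split at $|z|\le R$ and kill the tail uniformly in $n$), and handle the denominator through $\mathbb{P}(\tau_x=n)\sim(1-c)c^{-1}\,\mathbb{P}(\tau_x>n)$, i.e.\ \eqref{eq:corn}. The resulting constant $\chi=\rho/\varrho$ matches the statement, so there is nothing substantive to add.
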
 
For completeness we also prove a result which describes the behavior of bridges for random walks in our setting:
\begin{prop}\label{prop:propdw}
For $A\subset K\cap \mathbb{Z}^d$ finite, $z\in K$ and $t\in (0,1)$
\begin{equation*}
n^{p+\frac{d}{2}}\mathbb{P}(x + S([tn]) \in A|\tau_x >n, x + S(n) = z) \sim\frac{\rho}{(t(1-t))^{p+\frac{d}{2}}}\sum_{y\in A} U(y)U'(y), \quad n\rightarrow \infty.
\end{equation*}
\end{prop}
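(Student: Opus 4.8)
The plan is to split the bridge at the intermediate time $[tn]$ and feed the two resulting halves into the local asymptotics of Theorem~\ref{thm:thm1}. First I would fix $y\in A$ and use the Markov property together with homogeneity and independence of the increments to write
\[
\mathbb{P}(x+S([tn])=y,\ \tau_x>n,\ x+S(n)=z)=\mathbb{P}(x+S([tn])=y,\ \tau_x>[tn])\cdot\mathbb{P}(y+S(n-[tn])=z,\ \tau_y>n-[tn]).
\]
Indeed, on $\{x+S([tn])=y\}$ the event $\{\tau_x>n,\ x+S(n)=z\}$ is the intersection of the event that the first $[tn]$ steps keep the walk in $K$ and land it at $y$ (which depends only on $X(1),\dots,X([tn])$) with the event that the independent copy of the walk started at $y$ stays in $K$ for $n-[tn]$ further steps and ends at $z$. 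Since $t\in(0,1)$ we have $[tn]\to\infty$ and $n-[tn]\to\infty$, with $[tn]\sim tn$ and $n-[tn]\sim(1-t)n$, while $c^{[tn]}c^{n-[tn]}=c^{n}$ identically.

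Next I would apply Theorem~\ref{thm:thm1} to each factor, with a one-point target set, obtaining
\[
\mathbb{P}(x+S([tn])=y,\ \tau_x>[tn])\sim\rho\,c^{[tn]}(tn)^{-(p+\frac{d}{2})}U(x)U'(y)
\]
and
\[
\mathbb{P}(y+S(n-[tn])=z,\ \tau_y>n-[tn])\sim\rho\,c^{n-[tn]}((1-t)n)^{-(p+\frac{d}{2})}U(y)U'(z),
\]
so the product is asymptotic to $\rho^{2}c^{n}n^{-2(p+\frac{d}{2})}(t(1-t))^{-(p+\frac{d}{2})}U(x)U'(z)\,U(y)U'(y)$. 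Because $A$ is finite, I may sum over $y\in A$ term by term and obtain
\[
\mathbb{P}(x+S([tn])\in A,\ \tau_x>n,\ x+S(n)=z)\sim\rho^{2}c^{n}n^{-2(p+\frac{d}{2})}(t(1-t))^{-(p+\frac{d}{2})}U(x)U'(z)\sum_{y\in A}U(y)U'(y).
\]
Dividing by $\mathbb{P}(\tau_x>n,\ x+S(n)=z)\sim\rho\,c^{n}n^{-(p+\frac{d}{2})}U(x)U'(z)$, which is again Theorem~\ref{thm:thm1} with target $\{z\}$, the factors $\rho$, $c^{n}$, $U(x)$ and $U'(z)$ cancel and one factor $n^{-(p+\frac{d}{2})}$ remains; multiplying through by $n^{p+\frac{d}{2}}$ gives precisely the claimed equivalence.

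The only place requiring care, and the reason $A$ is assumed finite, is the interchange of the limit with the sum over $y$: for finite $A$ this is automatic, whereas an infinite $A$ would demand a version of the estimates behind Theorem~\ref{thm:thm1} that is uniform in the endpoint. Likewise, since the number of summands is bounded and both $[tn]$ and $n-[tn]$ diverge, the error terms produced by the two applications of Theorem~\ref{thm:thm1} combine harmlessly. Thus no genuine obstacle remains once Theorem~\ref{thm:thm1} is available; the argument is essentially a bookkeeping exercise around that result.
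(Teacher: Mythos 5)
Your argument is correct and is essentially the paper's own proof: the paper likewise splits the bridge at time $[tn]$ via the Markov property, writes the conditional probability as $\sum_{y\in A} q^{([tn])}(x,y)q^{(n-[tn])}(y,z)/q^{(n)}(x,z)$, and uses the finiteness of $A$ together with the local asymptotics (Theorem 6 of the driftless paper, equivalent to Theorem \ref{thm:thm1} for singleton targets) for each factor. The only cosmetic difference is that you cite Theorem \ref{thm:thm1} directly rather than the underlying driftless local limit theorem, which amounts to the same computation after the change of measure.
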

\indent Finally, Theorem \ref{thm:thm1} makes it possible to construct a Markov chain on $K\cap \mathbb{Z}^d$ conditioned to never leave the cone. Namely, we get the markov chain Z with transition matrix

\begin{equation*}
p(x,y) = \frac{1}{c} \mathbb{P}(x+S(1) = y) \frac{U(x)}{U(y)}, \quad \text{  for }x,y \in K\cap \mathbb{Z}^d
\end{equation*}
by looking at the weak limit of
\begin{equation*}
\mathbb{P}(x+S(1)=y_1,x+S(2)=y_2,\dots,x+S(k)=y_k,\tau_x>n|\tau_x>n)
\end{equation*}
as $n\rightarrow \infty$.
It is then easy to prove the following:
\begin{prop}\label{prop:z}
$Z$ is a strictly stochastic and transient Markov chain on $K\cap \mathbb{Z}^d$.
\end{prop}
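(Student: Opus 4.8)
The proposition is essentially a corollary of Theorem~\ref{thm:thm1}, and the plan is to read off both properties directly from the asymptotics established there. Write $Q$ for the substochastic kernel of the walk killed on exiting the cone, $Q(x,y)=\mathbb{P}(x+S(1)=y)$ for $x,y\in K\cap\mathbb{Z}^d$ and $Q(x,y)=0$ otherwise, so that $Q^{(n)}(x,y)=\mathbb{P}(x+S(n)=y,\,\tau_x>n)$, and recall that $p$ is, up to the normalisation factor $c^{-1}$, the Doob $h$-transform of $Q$ by the function $U$ with eigenvalue $c$, that is $p(x,y)=c^{-1}Q(x,y)\,U(y)/U(x)$. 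For this to make sense one first needs $0<U(x)<\infty$ for every $x\in K\cap\mathbb{Z}^d$: finiteness is immediate from $U(x)=\lim_n\varrho^{-1}c^{-n}n^{p+\frac{d}{2}}\mathbb{P}(\tau_x>n)$ by~\eqref{eq:assneg}, while $U(x)>0$ is part of the construction of $U$ in the previous section, where one also uses that $\mathbb{P}(\tau_x>n)>0$ for every $n$ (which holds since $x$ lies in the open cone $K$ and $S$ is genuinely $d$-dimensional and aperiodic).

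First I would check that $Z$ is strictly stochastic, i.e.\ that the rows of $p$ sum to exactly $1$ with no loss of mass. Since $p(x,y)\ge 0$, this is equivalent to the $c$-harmonicity of $U$ for the killed walk, $\sum_{y\in K\cap\mathbb{Z}^d}\mathbb{P}(x+S(1)=y)\,U(y)=c\,U(x)$; granting it, $\sum_y p(x,y)=(cU(x))^{-1}\sum_y Q(x,y)U(y)=1$. To obtain this identity I would start from the one-step decomposition $\mathbb{P}(\tau_x>n+1)=\sum_{y\in K\cap\mathbb{Z}^d}\mathbb{P}(x+S(1)=y)\,\mathbb{P}(\tau_y>n)$, divide through by $\varrho c^{n+1}(n+1)^{-(p+\frac{d}{2})}$, and let $n\to\infty$: by~\eqref{eq:assneg} the left-hand side tends to $U(x)$ and each summand tends to $c^{-1}\mathbb{P}(x+S(1)=y)U(y)$. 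Fatou's lemma yields the inequality $\sum_y\mathbb{P}(x+S(1)=y)U(y)\le cU(x)$ for free; the reverse inequality --- equivalently, the legitimacy of exchanging limit and infinite sum, equivalently the absence of escaping mass --- is precisely where the sharp tail estimate established in the proof of Theorem~\ref{thm:thm1} enters, in the form of a uniform summable bound $\mathbb{P}(\tau_y>n)\le C\,c^{n}n^{-(p+\frac{d}{2})}U(y)$. If the identity $\sum_y\mathbb{P}(x+S(1)=y)U(y)=cU(x)$ has already been recorded while constructing $U$, this step collapses to one line.

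Next I would prove transience, for which it suffices to show that every state is transient, i.e.\ $\sum_{n\ge1}p^{(n)}(x,x)<\infty$ for each $x\in K\cap\mathbb{Z}^d$. Telescoping the $U$-transform gives $p^{(n)}(x,y)=c^{-n}Q^{(n)}(x,y)\,U(y)/U(x)$, hence on the diagonal $p^{(n)}(x,x)=c^{-n}Q^{(n)}(x,x)=c^{-n}\,\mathbb{P}(x+S(n)=x,\,\tau_x>n)$. Applying Theorem~\ref{thm:thm1} with $A=\{x\}$ gives $\mathbb{P}(x+S(n)=x,\,\tau_x>n)\sim\rho\,c^{n}n^{-(p+\frac{d}{2})}U(x)U'(x)$, so $p^{(n)}(x,x)\sim\rho\,U(x)U'(x)\,n^{-(p+\frac{d}{2})}$. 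Since $p\ge 1$ and $d\ge 2$ we have $p+\frac{d}{2}\ge 2>1$, so $\sum_n n^{-(p+\frac{d}{2})}<\infty$ and therefore $\sum_n p^{(n)}(x,x)<\infty$; hence every state of $Z$ is transient.

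The only genuinely non-trivial input is the no-loss-of-mass in the $U$-transform, which rests on the sharp exit-time estimate of Theorem~\ref{thm:thm1}; the rest is bookkeeping, and if the $c$-harmonicity of $U$ is already available the proof reduces to the definition of the $h$-transform (for strict stochasticity) together with~\eqref{eq:assneg} and $p+\frac{d}{2}>1$ (for transience).
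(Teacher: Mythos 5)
Your argument is correct in substance, but the stochasticity part takes a genuinely different route from the paper. The paper does not derive the $c$-harmonicity of $U$ from the tail asymptotics at all: it first identifies the transition matrix $p(x,y)$ as the weak limit of the conditioned finite-dimensional distributions (a step you take as given, which is legitimate since the chain $Z$ is defined by that matrix before the proposition), and then observes that row-stochasticity, i.e.\ $\sum_y q(x,y)e^{h\cdot y}V(\hat y)=c\,e^{h\cdot x}V(\hat x)$, reduces after inserting \eqref{eq:h12} to $\sum_y d(x,y)V(\hat y)=V(\hat x)$, which is exactly the harmonicity of $V$ for the driftless killed walk and is part of its construction in Denisov--Wachtel; so the paper's proof of stochasticity is the ``one line'' you mention at the end. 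Your alternative derivation via the one-step decomposition of $\mathbb{P}(\tau_x>n+1)$, Fatou, and dominated convergence is workable and has the merit of being self-contained, but the dominating bound you invoke, $\mathbb{P}(\tau_y>n)\le C\,c^n n^{-(p+\frac d2)}U(y)$ with a constant uniform in $y$, is not established anywhere in the paper and is not obviously true near $\partial K$, where $U$ can be small. What the proof of Theorem \ref{thm:thm1} actually yields (Lemma 27 of Denisov--Wachtel plus Proposition \ref{prop:ndih}, summed over the cone using Assumption 5) is a bound of the form $\mathbb{P}(\tau_y>n)\le C\,c^n n^{-(p+\frac d2)}e^{h\cdot y}(1+|y|^p)$, and this is the domination you should use: it is summable against $\mathbb{P}(x+S(1)=y)$ because $\mathbb{E}\bigl[e^{h\cdot X}(1+|X|^p)\bigr]<\infty$ under the Cram\'er condition, so your limit-exchange goes through after replacing $U(y)$ by $e^{h\cdot y}(1+|y|^p)$. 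The transience part coincides with the paper's argument (finiteness of the Green function from the asymptotics of $p^{(n)}$), and your version, working on the diagonal with exponent $p+\frac d2\ge 2$, is if anything slightly more precise than the paper's stated exponent.
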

$Z$ is probably "the physically right" process to take as a random walk conditioned to never leave the cone and it is important to note that it is not constructed through a \emph{Doob-h}-transform, but involves instead a $c$-harmonic function, for some suitable $c\in(0,1)$. In some of the cases considered in applications, like two-dimensional random walks in two-dimensional cones, there are uncountably many positive harmonic functions (for a proof for the positive quadrant, see \cite{ignarolo}, the extension to a proof for more general cones in two dimensions can be made available by the author upon request). So there are uncountably many possible definitions through \emph{Doob-h}-transforms of a random walk conditioned to stay in a cone in this case. Proposition \ref{prop:z} tells us that none of them is equal to the process gained through the weak limit procedure.
\newline To conclude this section, we comment on a special case which is ubiquitous in applications.
\begin{example}[Two dimensional random walks with negative drift in the positive quadrant] 
There has been extensive research on random walks in the quarter plane. In the monograph \cite{fayolle}, the authors develop analytical and algebraic methods to study such random walks under strong assumptions on the distribution of the jump. These methods have been successfully used, for example in \cite{kurkovaraschel} and \cite{raschel} to study random walks of non-zero drift with small steps. In comparison, our assumptions here are less stringent and we follow a pure probabilistic approach.
\newline \indent In our setting, assume that $d=2$ and that the jump of the random walk has negative drift $m = \mathbb{E}[X] = (m_1,m_2)$ with $m_1,m_2<0$. Also assume that there exists some $h_0 > 0$ such that for 
$\tilde{h}_{i} \leq h_0,\quad i=1,2$ we have
\begin{equation*}
\mathbb{E}[e^{\sum_{i=1}^2 \tilde{h}_i X_i}] <\infty
\end{equation*} 
Assume also the existence of some $h=(h_1,h_2)$ with $h_0>h_i>0,\quad i=1,\dots,d$ such that
\begin{equation*}
\mathbb{E}\left[X_j e^{\sum_{i=1}^2 h_i X_i}\right] = 0 \quad j=1,2.
\end{equation*} 
Last, assume that the random walk is strongly aperiodic and that $X$ has a strictly two-dimensional distribution. Under these conditions, if we take for $K$ the interior of the positive quadrant, all of the Assumptions 1-6 are fulfilled. In particular, Theorem 1 holds and yields the asymptotics for $\mathbb{P}(\tau_x > n)$ for $x \in \mathbb{N}^2$ and all its immediate corollaries apply. In this setting we can give an explicit form for $M$. Namely, using Example 2 from \cite{dw1} one can calculate $M$ this way:
define first
\begin{equation*}
c_i = \mathbb{E}[(\tilde{X}_i)^2].
\end{equation*}
Then 
\begin{equation*}
\alpha = \textbf{Cov} \left( \frac{\tilde{X_1}}{\sqrt{c_1}}, \frac{\tilde{X_2}}{\sqrt{c_2}} \right) \in (-1, +1)
\end{equation*} due to non collinearity. With this, we have then 
\begin{equation*}
M = \frac{1}{\sqrt{1-\alpha^2}}
\begin{pmatrix}
\frac{\cos{\varphi}}{\sqrt{c_1}} & \frac{-\sin{\varphi}}{\sqrt{c_2}} \\
\frac{-\sin{\varphi}}{\sqrt{c_1}} & \frac{\cos{\varphi}}{\sqrt{c_2}}
\end{pmatrix},
\end{equation*}
where $\varphi$ is such that $\sin(2\varphi) = \alpha$. The terms $\frac{1}{\sqrt{c_i}}$ norm the variables $\tilde{X_i}$ into having variance $1$, while the $\cos$ and $\sin$-functions cause a rotation, which makes the components uncorrelated, as can be seen by straightforward computation. In the case of two-dimensional cones the value of $p$ is known and equal to $\frac{\pi}{arccos(-\alpha)}$.
\end{example}

\section{Proofs}
We first introduce some notation which we will use for the rest of the paper. We denote for $n\in \mathbb{N}$
\begin{equation*}
q^{(n)}(x,y) = \mathbb{P}(x+S(n)=y,\tau_x>n) \quad \text{and}\quad
d^{(n)}(x,y) = \mathbb{P}(\hat{x}+\hat{S}(n)=\hat{y},\hat{\tau}_{\hat{x}}>n)
\end{equation*}
and also 
\begin{equation*}
f^{(n)}(x,y) = \mathbb{P}(\hat{x}-\hat{S}(n)=\hat{y},\hat{\tau}_{\hat{x}}>n)
\end{equation*}
We then have for $x,y \in \mathbb{Z}^d$, using the inverse change of measure,
\begin{align*}
q^{(n)}(x,y) &= c^n \mathbb{E}\left[e^{-h\cdot \tilde{S}(n)}\textbf{1}_{\{x + \tilde{S}(n) = y, \tau_x>n\}}\right]\\
= & c^n e^{h\cdot x} \mathbb{E}\left[e^{-h\cdot (\tilde{S}(n)+x)}\textbf{1}_{\{x + \tilde{S}(n) = y, \tau_x>n\}}\right] = c^n e^{h\cdot (x-y)} \mathbb{P}(x + \tilde{S}(n) = y, \tau_x>n).
\end{align*}

This can be expressed in terms of $\hat{S}(n)$ due to invertibility of $M$ as 
\begin{equation}\label{eq:h12}
q^{(n)}(x,y) = c^n e^{h\cdot (x-y)} d^{(n)}(x,y)
\end{equation}
Here we have established a connection between the driftless and the nonzero drift case and can hope to use results from \cite{dw1} to analyse the nonzero drift case. There is one complication for that. 
\newline \indent
To get to the assymptotics of $\mathbb{P}(\tau_x>n)$ we need to consider
\begin{equation*}
\sum_{y \in K\cap\mathbb{Z}^d}  c^n e^{h\cdot (x-y)} d^{(n)}(x,y)
\end{equation*}
for $n\rightarrow \infty$. Since we have the assymptotics of $d^{(n)}(x,y)$ from \cite{dw1}, we would need to interchange the sum and the limit for $n\rightarrow \infty$ to get the desired assymptotics. This cannot be done directly, but can be justified if we sharpen an important auxiliary result from \cite{dw1}. Namely, the authors prove in the driftless case an estimate of the type 
\begin{equation}\label{eq:estimstoprw}
\mathbb{P}(\tau_y > n)\leq C\frac{|y|^p}{n^{\frac{p}{2}}}
\end{equation}
uniformly in $K_{n, \epsilon} = \{x \in K : dist(x,\partial K) \geq n^{\frac{1}{2}-\epsilon}\}$ for some $p>0$ for cones satisfying the requirements of their paper. $K_{n, \epsilon}$ contains "typical" vectors in the cone. Here $u$ is the harmonic function for the Brownian motion killed when leaving the cone and $p$ is the characteristic of the cone $K$, i.e. the degree of homogeneity of $u$ as explained in the first section of the paper. We will now extend this estimate to the whole cone K.

\subsection{A refinement of \eqref{eq:estimstoprw} for the driftless case}
For driftless random walks $S(n), n\geq 0$, which fulfill the moment assumptions of \cite{dw1} we want to prove the following Proposition:
\begin{prop}\label{prop:ndih}
Let K be a $d$-dimensional convex cone with vertex in $0$, which also fulfills Assumption 6 above with $M$ the identity matrix. Then there exists some $C>0$ such that uniformly for every $x \in K$ we have 
\begin{equation*}
\mathbb{P} (\tau_x > n) \leq C \frac{1+|x|^p}{n^{\frac{p}{2}}} 
\end{equation*}
\end{prop}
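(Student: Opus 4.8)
The plan is to split $K$ into three regions and treat each in turn, the whole difficulty sitting in a thin layer around $\partial K$, where I would reduce the bound to \eqref{eq:estimstoprw} via the strong Markov property. First, the easy regions: if $|x|\ge\sqrt n$ then $(1+|x|^p)n^{-p/2}\ge|x|^pn^{-p/2}\ge1\ge\mathbb{P}(\tau_x>n)$, so one may assume $|x|<\sqrt n$; and if moreover $\mathrm{dist}(x,\partial K)\ge n^{1/2-\epsilon}$, i.e. $x\in K_{n,\epsilon}$ for some fixed $\epsilon\in(0,1/2)$ in \eqref{eq:estimstoprw}, then \eqref{eq:estimstoprw} already gives $\mathbb{P}(\tau_x>n)\le C|x|^pn^{-p/2}\le C(1+|x|^p)n^{-p/2}$. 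So the content is the bound for $x\in K$ with $|x|<\sqrt n$ and $\mathrm{dist}(x,\partial K)<n^{1/2-\epsilon}$. For such $x$ I would fix $\delta\in(0,\epsilon)$, set $h_n=n^{1/2-\delta}$, and introduce the stopping time
\[
\sigma=\inf\{k\ge0:\ x+S(k)\notin K\ \text{or}\ \mathrm{dist}(x+S(k),\partial K)\ge h_n\},
\]
which on $\{\tau_x>n\}$ is just the first time the walk is at distance $\ge h_n$ from $\partial K$, and split $\{\tau_x>n\}$ according to whether $\sigma\le n/2$.

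On $\{\tau_x>n,\ \sigma>n/2\}$ the walk stays in the slab $\{z\in K:\mathrm{dist}(z,\partial K)<h_n\}$ for the first $\lfloor n/2\rfloor$ steps. I would show, using the convexity of $K$ (Assumption 4) and the $C^2$ regularity of $\partial K$ (Assumption 6), that from any point of this slab the walk within $h_n^2$ steps leaves $K$ or reaches distance $\ge h_n$ from $\partial K$ with probability at least some $\eta>0$, uniformly: after rescaling space by $h_n^{-1}$ (which leaves the cone unchanged) this is a statement about Brownian motion run for unit time, true by comparison with the tangent half-space (which contains $K$ by convexity) far from the vertex and by compactness near it. Chopping $[0,n/2]$ into $\asymp n^{2\delta}$ blocks of length $h_n^2$ and iterating the Markov property then gives $\mathbb{P}(\tau_x>n,\ \sigma>n/2)\le\mathbb{P}(\sigma>n/2)\le(1-\eta)^{\lfloor(n/2)/h_n^2\rfloor}\le e^{-cn^{2\delta}}$, which is negligible against $(1+|x|^p)n^{-p/2}$ (finitely many small $n$ being absorbed into the constant).

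On $\{\tau_x>n,\ \sigma\le n/2\}$ I would use the strong Markov property at $\sigma$. Since $\sigma\le n/2$ we have $n-\sigma\ge n/2$ and, because $\delta<\epsilon$, the point $x+S(\sigma)$ lies at distance $\ge h_n=n^{1/2-\delta}\ge(n-\sigma)^{1/2-\epsilon}$ from $\partial K$, hence in $K_{n-\sigma,\epsilon}$; so \eqref{eq:estimstoprw} applies to the walk restarted at $\sigma$ and
\[
\mathbb{P}(\tau_x>n,\ \sigma\le n/2)\ \le\ C\,\mathbb{E}\Big[\tfrac{V(x+S(\sigma))}{(n-\sigma)^{p/2}};\ \sigma\le n/2,\ \tau_x>\sigma\Big]\ \le\ \frac{C'}{n^{p/2}}\,\mathbb{E}\big[V(x+S(\sigma));\ \sigma\le n/2,\ \tau_x>\sigma\big],
\]
where, in the sharp form of \eqref{eq:estimstoprw} from \cite{dw1}, $|y|^p$ may be replaced by the positive harmonic function $V$ of the killed walk, which obeys $V(y)\le C(1+|y|^p)$ and is such that $V(x+S(k))\mathbf{1}_{\{\tau_x>k\}}$ is a nonnegative martingale. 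Optional stopping of this martingale at the bounded time $\sigma\wedge\lfloor n/2\rfloor$ would then give $\mathbb{E}[V(x+S(\sigma));\ \sigma\le n/2,\ \tau_x>\sigma]\le V(x)\le C(1+|x|^p)$, whence $\mathbb{P}(\tau_x>n,\ \sigma\le n/2)\le C''(1+|x|^p)n^{-p/2}$; combined with the previous step this proves the Proposition. Should \cite{dw1} supply \eqref{eq:estimstoprw} only in the bare form with $|y|^p$, the martingale step must be replaced by a direct estimate of $\mathbb{E}[|x+S(\sigma)|^p;\ \sigma\le n/2,\ \tau_x>\sigma]$: one bounds $|x+S(\sigma)|\lesssim|x|+h_n$ off a super-polynomially unlikely event, and controls the survival probability $\mathbb{P}(\tau_x>\sigma,\ \sigma<\infty)$ by a gambler's-ruin / Denisov--Wachtel estimate for the walk reaching distance $h_n$ from $\partial K$ before leaving $K$ (of order $\mathrm{dist}(x,\partial K)/h_n$, or $u(x)/h_n^p$ near the vertex).

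The hard part, exactly as the paper flags, will be this last step: producing the boundary-layer term with the correct power $n^{-p/2}$ --- a naive use of \eqref{eq:estimstoprw} only yields $n^{-\delta p}$, which is too large --- uniformly in $x\in K$, including near the vertex. This is where the Denisov--Wachtel estimate is not sharp enough off the shelf and has to be specialised to $K$, either through its $V$-form together with the martingale property of $V(x+S(\cdot))\mathbf{1}_{\{\tau_x>\cdot\}}$, or through the fine survival estimate just described, whose uniformity over the cone --- again using that, by Assumption 6, $u$ extends harmonically past $\partial K$, so is comparable to $\mathrm{dist}(\cdot,\partial K)\,|\cdot|^{p-1}$ near the boundary --- is the delicate point. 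The uniform slab-escape estimate of the second step is a lesser but still non-trivial ingredient.
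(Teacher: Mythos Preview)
Your approach is structurally the same as the paper's: introduce the first entry time into the ``deep'' part $K_{n,\epsilon}$ of the cone, bound exponentially the probability that this takes too long, and on the complementary event apply the strong Markov property together with the known bound on $K_{n,\epsilon}$ and a (super)martingale estimate for the harmonic function at the entry time. The paper calls your $\sigma$ by $\gamma_{n,x}$, replaces your threshold $n/2$ by $n^{1-\epsilon}$, and takes the exponential slab-escape bound straight from Lemma~14 of \cite{dw1} rather than rederiving it. The substantive difference is the choice of harmonic function: the paper works with the Brownian harmonic function $u$ (extended to $v$), which is only approximately harmonic for the walk, and therefore proves a separate Lemma controlling $\mathbb{E}[v(x+S(T)),\,\tau_x\ge T,\,T\le n]\le C(1+|x|^p)$ via the quasi-martingale $Y_n=v(x+S(n))-\sum_{k<n}f(x+S(k))$ and the bounds (22), (24) of \cite{dw1}. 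Your route through the exact RW-harmonic $V$ and optional stopping of $V(x+S(\cdot))\mathbf 1_{\{\tau_x>\cdot\}}$ is cleaner \emph{if} the bound $\mathbb{P}(\tau_y>n)\le CV(y)n^{-p/2}$ is available uniformly on $K_{n,\epsilon}$; but that is not what \eqref{eq:estimstoprw} says, nor what \cite{dw1} directly supplies --- the paper in fact has to upgrade \eqref{eq:estimstoprw} to the $u$-version (its Lemma~\ref{lemma:ndih1}) via KMT coupling before the martingale step can bite.

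Your fallback plan, in case only the bare $|y|^p$ bound is available, has a genuine gap. The claim ``$|x+S(\sigma)|\lesssim |x|+h_n$ off a super-polynomially unlikely event'' is false: nothing prevents the walk from travelling a distance of order $\sqrt{\sigma}$, hence up to order $\sqrt n$, \emph{tangentially} along $\partial K$ while remaining within distance $h_n$ of it (think of $x$ near the vertex of the positive quadrant and the walk diffusing along one axis). So $\mathbb{E}[|x+S(\sigma)|^p;\,\sigma\le n/2,\,\tau_x>\sigma]$ is not controlled by $(|x|+h_n)^p\cdot\mathbb{P}(\tau_x>\sigma)$, and the gambler's-ruin factor you invoke does not compensate for this. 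This is exactly why the paper does not attempt to control $|x+S(\gamma_{n,x})|$ directly but instead bounds $u(x+S(\gamma_{n,x}))$, which is small near the boundary regardless of the tangential position; the price is the correction-term analysis of Lemma~\ref{lemma:ndih2}.
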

To prove Proposition \ref{prop:ndih} we make use of two Lemmas.
\begin{lemma}\label{lemma:ndih1}
Let K be a $d$-dimensional convex cone with vertex in $0$, which is also $C^2$. For $\epsilon>0$ sufficiently small there exists some $C > 0$ such that, uniformly for every $y\in K_{n,\epsilon}$,
\begin{equation*}
\mathbb{P} (\tau_y > n) \leq C \frac{u(y)}{n^{\frac{p}{2}}}.
\end{equation*}
\end{lemma}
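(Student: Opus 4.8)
The estimate \eqref{eq:estimstoprw} from \cite{dw1} already gives $\mathbb{P}(\tau_y>n)\leq C|y|^p n^{-p/2}$ uniformly for $y\in K_{n,\epsilon}$, so the only thing to do is to replace $|y|^p$ by $u(y)$, i.e. to show these two quantities are comparable for $y$ well inside the cone. The plan is as follows. Since $u$ is homogeneous of degree $p$ with $u(y)=|y|^p m(y/|y|)$ and $m$ is continuous and strictly positive on the \emph{open} spherical cap $\Sigma$, vanishing on $\partial\Sigma$, the ratio $u(y)/|y|^p = m(y/|y|)$ is bounded away from $0$ and $\infty$ only on compact subsets of $\Sigma$; it degenerates precisely as $y/|y|$ approaches $\partial\Sigma$. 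So one cannot hope for $u(y)\asymp |y|^p$ on all of $K_{n,\epsilon}$. Instead I would split $K_{n,\epsilon}$ into two regions according to whether $y/|y|$ is at angular distance $\geq \delta$ or $<\delta$ from $\partial\Sigma$, for a fixed small $\delta$ (independent of $n$).

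On the ``central'' region $\{y\in K_{n,\epsilon}: \operatorname{dist}(y/|y|,\partial\Sigma)\geq \delta\}$ one has $m(y/|y|)\geq c_\delta>0$, hence $|y|^p\leq c_\delta^{-1}u(y)$, and \eqref{eq:estimstoprw} immediately gives the claim with $C=c_\delta^{-1}C_{\mathrm{old}}$. On the ``boundary-layer'' region $\{y\in K_{n,\epsilon}: \operatorname{dist}(y/|y|,\partial\Sigma)<\delta\}$ the constraint $y\in K_{n,\epsilon}$ forces $|y|$ to be large: since $\operatorname{dist}(y,\partial K)\geq n^{1/2-\epsilon}$ and, by the $C^2$ regularity of $\partial\Sigma$, $\operatorname{dist}(y,\partial K)$ is comparable to $|y|\operatorname{dist}(y/|y|,\partial\Sigma)$, we get $|y|\,\operatorname{dist}(y/|y|,\partial\Sigma)\gtrsim n^{1/2-\epsilon}$, hence $|y|\gtrsim n^{1/2-\epsilon}/\delta$. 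Combined with the crude bound $\operatorname{dist}(y,\partial K)\le |y|$ and $\operatorname{dist}(y,\partial K)\ge n^{1/2-\epsilon}$ this says $y$ sits at a very specific spot. Here I would use the extendability hypothesis in Assumption 6: $u$ extends to a positive harmonic function $\tilde u$, homogeneous of degree $p$, on a strictly larger cone $\tilde K\supset K$, and on $\tilde K$ the density $m$ associated with $\tilde u$ is bounded below by a positive constant on $\Sigma$ (and in fact on a neighborhood of $\overline\Sigma$ inside $\tilde\Sigma$). Since $\tilde u\geq u$ is not what we want — we want a lower bound on $u$ — the correct use is the reverse: near $\partial\Sigma$, the harmonic function $u$ for the smaller cone is comparable to $\operatorname{dist}(y/|y|,\partial\Sigma)\cdot$(something), more precisely the boundary Harnack principle for $C^2$ domains (applicable since $\partial\Sigma$ is $C^2$) gives $u(y)\asymp |y|^p\,\operatorname{dist}(y/|y|,\partial\Sigma)$ in the boundary layer. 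Therefore $u(y)\gtrsim |y|^p\cdot n^{1/2-\epsilon}/|y| = |y|^{p-1}n^{1/2-\epsilon}$. I then need $\mathbb{P}(\tau_y>n)\leq C u(y)/n^{p/2}$, i.e. I need to beat $|y|^{p-1}n^{1/2-\epsilon}/n^{p/2}$; applying \eqref{eq:estimstoprw} gives $\mathbb{P}(\tau_y>n)\leq C|y|^p/n^{p/2} = C|y|\cdot|y|^{p-1}/n^{p/2}$, so it suffices that $|y|\lesssim n^{1/2-\epsilon}$, i.e. that we are \emph{also} in the regime $|y|\leq A n^{1/2}$ say — and for $|y|\gg \sqrt n$ the probability $\mathbb{P}(\tau_y>n)$ is handled separately by a Gaussian/large-deviation bound since the walk has all moments and is unlikely to have travelled distance $|y|\gg\sqrt n$ worth of oscillation; more simply, for $|y|\geq A\sqrt n$ one has $u(y)/n^{p/2}\geq A^p m(y/|y|)$ which is not small, while... — actually the clean route is: restrict attention to $|y|\leq A\sqrt n$ (the complementary case being trivial because then $u(y)n^{-p/2}$ together with the boundary-layer lower bound on $u$ exceeds a constant, or is treated by a moment bound on the walk), and there combine $|y|\le A\sqrt n$ with $|y|\ge c\, n^{1/2-\epsilon}/\delta$ to see the boundary layer has controlled width, making the two bounds match up to constants.

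The main obstacle will be the boundary-layer estimate: making precise that $u(y)$ is comparable to $|y|^p$ times the scaled distance to $\partial K$, uniformly, which requires the boundary Harnack inequality for the $C^2$ domain $\Sigma$ (or equivalently for the cone $K$ near its lateral boundary), and then dovetailing the resulting lower bound on $u$ with the a priori bound \eqref{eq:estimstoprw} across the range $n^{1/2-\epsilon}\lesssim |y|\lesssim n^{1/2}$. The homogeneity of $u$ and the $C^2$ regularity of $\partial\Sigma$ are exactly what is needed to run this, and the extendability clause of Assumption 6 guarantees $u$ does not decay faster than linearly in the distance to $\partial\Sigma$ (it behaves like the distance itself, not like a higher power), which is what prevents the boundary-layer contribution from blowing up the constant.
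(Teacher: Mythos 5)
Your reduction of the lemma to the known estimate \eqref{eq:estimstoprw} cannot work, because on the boundary layer the lemma is \emph{strictly stronger} than \eqref{eq:estimstoprw}, by a factor that is unbounded in $n$. Concretely: in the layer you correctly note $u(y)\asymp |y|^{p-1}\,\mathrm{dist}(y,\partial K)$, so the target bound $u(y)n^{-p/2}$ is smaller than the available bound $|y|^p n^{-p/2}$ by the factor $\mathrm{dist}(y,\partial K)/|y|$. For $y\in K_{n,\epsilon}$ with, say, $\mathrm{dist}(y,\partial K)\approx n^{1/2-\epsilon}$ and $|y|\approx A\sqrt n$ (or $|y|\approx n^{1/2+a}$ with $a(p-1)<\epsilon$), this factor is of order $n^{-\epsilon}$ or smaller, while the target $u(y)n^{-p/2}$ is still well below a constant; so neither the trivial bound $\mathbb{P}(\tau_y>n)\le 1$ nor \eqref{eq:estimstoprw} gives the claim, and your argument at best proves $\mathbb{P}(\tau_y>n)\le C\,u(y)\,n^{\epsilon}/n^{p/2}$, which is not the lemma. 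Two specific steps are also incorrect as stated: the claim that for $|y|\ge A\sqrt n$ one has $u(y)/n^{p/2}\ge A^p m(y/|y|)$ ``which is not small'' fails because $m$ vanishes on $\partial\Sigma$, and the large-deviation remark is beside the point, since $\{\tau_y>n\}$ does not require the walk to travel distance $|y|$ — starting far out in norm but at distance $\approx n^{1/2-\epsilon}$ from a face, the walk stays in the cone with probability of order $n^{-\epsilon}$, and controlling this requires an estimate that sees $\mathrm{dist}(y,\partial K)$, which no manipulation of \eqref{eq:estimstoprw} can supply.

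The missing ingredient is a probabilistic mechanism producing the factor $u(y)$ directly. The paper's proof (following Lemma 20 of \cite{dw1}) couples the walk with a Brownian motion via a KMT-type estimate on the event $A_n=\{\sup_{u\le n}|S([u])-B(u)|<n^{1/2-\gamma}\}$, shifts the starting point to $y^{+}=y+R_0x_0n^{1/2-\gamma}$ so that on $A_n$ the event $\{\tau_y>n\}$ forces $\{\tau^{bm}_{y^{+}}>n\}$, invokes the Brownian bound $\mathbb{P}(\tau^{bm}_{y^{+}}>n)\le C u(y^{+})n^{-p/2}$ from \cite{banusmits}, \cite{var}, shows $u(y^{+})=u(y)(1+o(1))$ uniformly on $K_{n,\epsilon}$ via a Taylor expansion and the gradient bound, and absorbs the coupling error $O(n^{-r})$ into $u(y)n^{-p/2}$ using the lower bound $u(y)\ge C\,\mathrm{dist}(y,\partial K)^p\ge Cn^{p(1/2-\epsilon)}$. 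Your comparability discussion of $u$ (homogeneity, behavior like $|y|^{p-1}\mathrm{dist}(y,\partial K)$ near the lateral boundary) is fine as an analytic fact and indeed appears in the paper as \eqref{eq:ndih2}, but it plays the role of controlling $u(y^{+})$ versus $u(y)$ and the error term — it cannot substitute for the coupling step that transfers the Brownian estimate, in which the dependence on the boundary distance is already built in.
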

\begin{proof}
The proof follows ideas from Lemma 20 of \cite{dw1}. Take some $x_0\in K$ fixed such that $|x_0|=1$. Also take some arbitrary but fixed $\delta \in (0,1)$. For any $\epsilon >0$ such that $p\epsilon<\frac{1}{2}$, choose $\gamma \in (p\epsilon, \frac{1}{2})$. Define with this for every $y\in K_{n,\epsilon}$ the vector $y^+ = y + R_0x_0n^{\frac{1}{2}-\gamma}$, where $R_0>0$ is chosen such that $dist(R_0x_0+K,\partial K)>1$. Note that this choice of $R_0$ ensures that $R_0x_0n^{\frac{1}{2}-\gamma}+K\subset K_{n,\gamma}$. Define now the event 
\begin{equation}
A_n = \{\sup_{u\leq n}|S([u])-B(u)|< n^{\frac{1}{2}-\gamma}\}.
\end{equation}
Here, $B$ is a Brownian motion defined on the same probability space as $S(n)$. We know from the coupling method used in Lemma 17 of \cite{dw1} that 
\begin{equation}
\mathbb{P}(A_n^{c})\leq Cn^{-r}
\end{equation}
with $r = r(\delta,\gamma) = \frac{\delta}{2} - 2\gamma - \gamma\delta$.
It follows
\begin{equation}\label{eq:ndih}
\mathbb{P} (\tau_y > n) \leq \mathbb{P} (\{\tau_y > n\}\cap A_n) + O(n^{-r}).
\end{equation}
The result would follow if we could prove the desired inequality for each of the two terms on the right side of inequality \eqref{eq:ndih}. For this, we first note that the respective $u$ of the cone $K$, as defined in subsection \ref{subsec:1.2},  fulfills
\begin{equation}\label{eq:ndih2}
u(y) \geq C(dist(y, \partial K)^p) \text{ ,} \quad u(y) \geq C|y|^{p-1}(dist(y, \partial K)), \quad
\end{equation}
since the cone $K$ is assumed to be convex and $C^2$.
To see that this holds, consult \cite{dw1}, Lemma 19 and \cite{var}, Section 0 and especially Theorem 1 there and its immediate corollaries. Note that $p\geq 1$ for convex cones, according to \cite{var}.
\newline \indent
For $y\in K_{n,\epsilon}$ the first inequality of \eqref{eq:ndih2} implies 
\begin{equation*}
u(y)n^{-\frac{p}{2}} \geq Cn^{-p\epsilon}. 
\end{equation*}
This in turn, implies that $n^{-r} = o(u(y)n^{-\frac{p}{2}})$ if, keeping $\delta$ fixed, $\epsilon$ and then $\gamma$ are chosen small enough. This establishes the desired inequality for the second term on the right hand side of \eqref{eq:ndih}. \newline \indent For the first term, we first note that the choice of $R_0$ implies $\{\tau_y > n\}\cap A_n\subset \{\tau_{y^+}^{bm}>n\}$. Moreover, using a Taylor expansion of $u$, Lemma 7 in \cite{dw1} and the second inequality in \eqref{eq:ndih2} we have for $y\in K_{n,\epsilon}$
\begin{equation*} 
|u(y^{+}) - u(y)| \leq C|y|^{p-1}n^{\frac{1}{2}-\gamma} \leq Cu(y)n^{-(\gamma-\epsilon)}.
\end{equation*}
This implies immediately 
\begin{equation*}
u(y^{+}) = u(y)(1+o(1)) \quad \text{uniformly for} \quad y\in K_{n,\epsilon}.
\end{equation*} 
\indent This last implication and the well-known estimate (see again \cite{banusmits} or \cite{var})
\begin{equation*}
\mathbb{P} (\tau^{bm}_{y} > n) \leq C \frac{u(y)}{n^{\frac{p}{2}}}, \quad\text{ for } y\in K\quad
\end{equation*}
establishes the inequality for the first term on the right side of \eqref{eq:ndih2}.\newline The result follows.
\end{proof}
We also need to prove the following auxiliary result.
\begin{lemma}\label{lemma:ndih2}
There exists a constant $C>0$ such that for $v$ as in Subsection 1.3 of \cite{dw1}, uniformly for every stopping time $T$ adapted to the natural filtration of the random walk, and $n\in \mathbb{N}$
\begin{equation*}
\mathbb{E}[v(x + S(T)), \tau_x \geq T, T \leq n] \leq C(1 + |x|^p) , \quad x\in K .
\end{equation*}
\end{lemma}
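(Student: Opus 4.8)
The plan is to exhibit the killed, $v$-weighted walk as a nonnegative supermartingale and then to apply the optional stopping theorem at the bounded time $T\wedge n$.

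First I would recall the structural properties of $v$ from Subsection 1.3 of \cite{dw1}: $v$ is nonnegative, vanishes on $\mathbb{Z}^d\setminus K$, satisfies the uniform growth bound $v(x)\le C(1+|x|^p)$ for all $x\in K$ (this is exactly the estimate \cite{dw1} establishes en route to constructing the harmonic function of the killed walk), and is superharmonic for the killed random walk:
\[
\mathbb{E}\big[v(y+S(1)),\ \tau_y>1\big]=\mathbb{E}\big[v(y+X)\,\mathbf{1}_{\{y+X\in K\}}\big]\le v(y),\qquad y\in K .
\]
In \cite{dw1} one takes $v=u+g$, where $g$ is the absolutely convergent Green's-type correction $g(y)=\sum_{k\ge0}\mathbb{E}\big[\,|\psi(y+S(k))|,\ \tau_y>k\,\big]$ with $\psi(y)=\mathbb{E}[u(y+X)]-u(y)$; the superharmonicity of $v$ is then a one-step computation, using $u\ge0$ together with the first-step identity $\mathbb{E}[g(y+X)\mathbf{1}_{\{y+X\in K\}}]=g(y)-|\psi(y)|$. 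Equivalently, setting $W_k:=v(x+S(k))\,\mathbf{1}_{\{\tau_x>k\}}$, the process $(W_k)_{k\ge0}$ is a nonnegative supermartingale with respect to the natural filtration of $S$; it is integrable since $0\le W_k\le C(1+|x+S(k)|^p)$ and the driftless walk has finite moments of order $p$ under the hypotheses of \cite{dw1}.

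Next, fix $x\in K$, $n\in\mathbb{N}$ and a stopping time $T$, and put $\sigma:=T\wedge n$, which is a bounded stopping time with $\sigma\le n$. Optional stopping for the supermartingale $(W_k)$ yields
\[
\mathbb{E}[W_\sigma]\ \le\ \mathbb{E}[W_0]\ =\ v(x).
\]
On the event $\{\tau_x\ge T\}\cap\{T\le n\}$ we have $\sigma=T\le\tau_x$, so $W_\sigma=v(x+S(T))\,\mathbf{1}_{\{\tau_x>T\}}$; and on its subevent $\{\tau_x=T\le n\}$ the point $x+S(T)$ lies outside $K$, whence $v(x+S(T))=0=W_\sigma$ there as well. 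Thus $W_\sigma=v(x+S(T))$ everywhere on $\{\tau_x\ge T,\ T\le n\}$, and since $W_\sigma\ge0$,
\[
\mathbb{E}\big[v(x+S(T)),\ \tau_x\ge T,\ T\le n\big]=\mathbb{E}\big[W_\sigma,\ \tau_x\ge T,\ T\le n\big]\ \le\ \mathbb{E}[W_\sigma]\ \le\ v(x)\ \le\ C(1+|x|^p),
\]
with $C$ independent of $x$, $T$ and $n$. This is the claim.

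I expect the genuine content to lie not in this optional-stopping step but in the two facts about $v$ imported from \cite{dw1}: its superharmonicity for the killed walk (built into the construction of $v$ as $u$ plus an absolutely summed correction), and above all the uniform bound $v(x)\le C(1+|x|^p)$. If one had to reprove the latter, the work would be to bound the Green's-type sum $g(x)=\sum_{k\ge0}\mathbb{E}[\,|\psi(x+S(k))|,\ \tau_x>k\,]$ uniformly in $x\in K$: split it at $k\asymp|x|^2$, controlling the early terms via $\mathbb{P}(\tau_x>k)\le1$ and $|\psi(y)|\le C(1+|y|)^{p-2}$ (so that their total is $\lesssim|x|^2(1+|x|)^{p-2}\lesssim 1+|x|^p$), and the diffusive tail $k\gtrsim|x|^2$ via the exit-time decay, which is exactly the regime where the estimate \eqref{eq:estimstoprw} already available from \cite{dw1} for typical starting points suffices. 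In the present write-up these facts are only cited.
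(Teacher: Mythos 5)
Your optional-stopping skeleton is sound, but the proof rests on structural properties of $v$ that you assert rather than prove, and which are not the properties of the function $v$ from Subsection 1.3 of \cite{dw1} that the lemma is actually about. In \cite{dw1}, $v$ is a perturbation of the Brownian harmonic function $u$ whose one-step mean drift $f(x)=\mathbb{E}[v(x+X)]-v(x)$ has no sign, and $v$ does not vanish off $K$; this is precisely why Lemma 12 of \cite{dw1} supplies the two estimates that the present lemma needs, namely the overshoot bound $\mathbb{E}\left[|v(x+S(\tau_x))|\right]\le C(1+|x|^p)$ (their (22)) and the cumulative drift bound $\mathbb{E}\left[\sum_{k=0}^{\tau_x-1}|f(x+S(k))|\right]\le C(1+|x|^p)$ (their (24)). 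The three facts you import --- nonnegativity, $v\equiv 0$ on $\mathbb{Z}^d\setminus K$, and superharmonicity for the killed walk --- are exactly what would make both this lemma and much of the construction in \cite{dw1} trivial, and they are not available: the recipe ``$v=u+g$ with $g$ the Green-type sum of $|\psi|$'' is your own, not the one in \cite{dw1}, and it cannot deliver all three properties simultaneously. If $u$ is extended harmonically past $\partial K$ (as Assumption 6 permits), then the pointwise bound $|\psi(y)|\le C(1+|y|)^{p-2}$ you invoke is fine, but $v$ neither vanishes off $K$ nor is clearly superharmonic for the killed walk, since the exit term $\mathbb{E}[u(y+X),\,y+X\notin K]$ can be negative; if instead $u$ is truncated to $0$ off $K$, superharmonicity and vanishing do hold, but $\psi$ is then no longer small near the boundary (it can be of order $|y|^{p-1}$ there rather than $|y|^{p-2}$), so the convergence and $O(1+|x|^p)$ growth of $g$ is no longer the estimate you sketch. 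Concretely, the step of your argument that fails is the identity $W_\sigma=v(x+S(T))$ on $\{\tau_x\ge T,\ T\le n\}$: on the part $\{\tau_x=T\}$ the point $x+S(T)$ lies outside $K$, and for the actual $v$ this contribution is neither zero nor controlled by the nonnegative supermartingale $W$; it requires the overshoot estimate (22).

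The paper keeps your stopping-time idea but compensates the drift instead of assuming it has a sign: it works with the martingale $Y_{n+1}=v(x+S(n+1))-\sum_{k=0}^{n}f(x+S(k))$, first derives $\mathbb{E}[u(x+S(n)),\tau_x>n]\le C(1+|x|^p)$, then applies optional stopping at $T\wedge\tau_x\wedge n$ and absorbs all error terms using exactly (22) and (24) of \cite{dw1}. To repair your write-up you would either have to prove the three properties you assumed for a suitable substitute of $v$ (which essentially amounts to re-deriving (22) and (24)), or follow the compensated-martingale route.
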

\begin{proof}
We will make use of the estimates in Section 2 of \cite{dw1} throughout.
We first prove the simpler estimate 
\begin{equation*}
\mathbb{E}[u(x+S(n)), \tau_x > n] \leq C(1+|x|^p).
\end{equation*}
Indeed, for $v$ as in Subsection 1.3 of \cite{dw1} define the martingale sequence
\begin{equation*}\label{eq:martseq}
\begin{split}
Y_0 &= v(x),\\
Y_{n+1} &= v(x + S(n+1)) - \sum_{k=0}^{n} f(x+S(k)) ,\quad x \in K, \quad n \geq 0.
\end{split}
\end{equation*}
Here $f$ is the mean drift of $v$, i.e.
\begin{equation*}\label{eq:f}
f(x) = \mathbb{E}[v(x+X)] - v(x).
\end{equation*}
From the definition and properties of $Y_k$, for $k \geq 1$ we have 
\begin{align*}
&\mathbb{E}[u(x+S(n)), \tau_x > n] = \mathbb{E}[Y_n, \tau_x>n] + \sum_{k=0}^{n-1} \mathbb{E}[f(x+S(k)), \tau_x>n] \\
&= \mathbb{E}_x[Y_n] - \mathbb{E}[Y_n, \tau_x \leq n] + \sum_{k=0}^{n-1} \mathbb{E}[f(x+S(k)), \tau_x>n] \\
&= u(x) - \mathbb{E}[Y_{\tau_x}, \tau_x \leq n] +  \sum_{k=0}^{n-1} \mathbb{E}[f(x+S(k)), \tau_x>n] \quad \text{(using the martingale property of } Y_n)\\
&= u(x) - \mathbb{E}[v(x + S(\tau_x)) , \tau_x \leq n] + \sum_{k=0}^{n-1} \mathbb{E}[f(x+S(k)), \tau_x>n] +\mathbb{E} [\sum_{k=0}^{\tau_x-1} f(x+S(k)), \tau_x\leq n]\\
& \leq u(x) + \mathbb{E}[|v(x + S(\tau_x))|] + 2\mathbb{E} [\sum_{k=0}^{\tau_x-1} |f(x+S(k))|] \\
&\leq C(1 + |x|^p)
\end{align*}
uniformly in $x$. Here, in the last inequality we have used the properties of $u$ (see Section 1) and (22) and (24) from Lemma 12 in \cite {dw1}. Now set $\sigma_x =\tau_x\wedge n$. Then $\lbrace \tau_x \geq T, T\leq n \rbrace = \lbrace T \leq \sigma_x\rbrace$. Since $Y_k$ is a martingale we get
\begin{equation*}
\mathbb{E}_x[Y_{\sigma_x}]= \mathbb{E}_x[Y_{\sigma_x\wedge T}] = \mathbb{E}_x[Y_T , T \leq \sigma_x] + \mathbb{E}_x[Y_{\sigma_x}, T > \sigma_x].
\end{equation*}
That is 
\begin{equation*}
\mathbb{E}_x[Y_T , T \leq \sigma_x] =\mathbb{E}_x[Y_{\sigma_x}, T \leq \sigma_x].
\end{equation*}
Now the definition of $Y_k$ yields
\begin{align*}
\mathbb{E}[v(x+ S(T)), T \leq \sigma_x] &= \mathbb{E}[Y_T + \sum_{k = 0}^{T-1} f(x + S(k)) , T \leq \sigma_x] \\
& = \mathbb{E}_x [Y_{\sigma_x} , T \leq \sigma_x] + \mathbb{E}[\sum_{k = 0}^{T-1} f(x + S(k)) , T \leq \sigma_x]\\
& = \mathbb{E} [v(x + S({\sigma_x}) , T \leq \sigma_x] - \mathbb{E}[\sum_{k = T}^{\sigma_x-1} f(x + S(k)) , T \leq \sigma_x]\\
& \leq \mathbb{E} [u(x + S({n})) , \tau_x > n] +\mathbb{E} [|v(x + S({\tau_x})|] \\&+ \mathbb{E}[\sum_{k = 0}^{\tau_x-1} |f(x + S(k))| , T \leq \sigma_x].
\end{align*}

By using again (22) and (24) from Lemma 12 in \cite {dw1} and the first part of the proof we get the result.
\end{proof}

\begin{proof}[Proof of Proposition \ref{prop:ndih}]
For any $\epsilon>0$ and $\gamma_{n,x} = \inf \lbrace n \geq 0 | x+ S(n) \in K_{n,\epsilon}\rbrace$, the first entry time in $K_{n,\epsilon}$, we have
\begin{equation*}
\mathbb{P} (\tau_x > n) = \mathbb{P} (\tau_x > n, \gamma_{n,x} \leq n^{1-\epsilon}) + \mathbb{P} (\tau_x > n, \gamma_{n,x} > n^{1-\epsilon}).
\end{equation*}
Using Lemma 14 in \cite{dw1} this can be written as
\begin{equation*}
\mathbb{P} (\tau_x > n) = \mathbb{P} (\tau_x > n, \gamma_{n,x} \leq n^{1-\epsilon}) + O(e^{-Cn^{\epsilon}}).
\end{equation*}
Therefore, it suffices to find an upper bound as suggested, only for the first term. Using Markov property we get 
\begin{equation*}
\mathbb{P} (\tau_x > n, \gamma_{n,x} \leq n^{1-\epsilon}) \leq \int_{K_{n, \epsilon}} \mathbb{P} (x + S(\gamma_{n,x}) \in dy, \tau_x > \gamma_{n,x}, \gamma_{n,x} \leq n^{1-\epsilon})\mathbb{P} (\tau_y > n - n^{1-\epsilon}).
\end{equation*}
With help of Lemma \ref{lemma:ndih1} this yields for $\epsilon>0$ sufficiently small
\begin{equation*}
\mathbb{P} (\tau_x > n , \gamma _{n,x} \leq n^{1-\epsilon} ) \leq  C n^{-\frac{p}{2}} \mathbb{E} [u(x + S(\gamma_{n,x})) , \tau_x > \gamma _{n,x} , \gamma _{n,x} \leq n^{1-\epsilon}].
\end{equation*}
Now the result follows immediately from Lemma \ref{lemma:ndih2}.
\end{proof}

\subsection{Proof of Theorem \ref{thm:thm1}}

Take some $A\subset K\cap \mathbb{Z}^d$ and a real number $R>0$. Then we have 
\begin{align*}
\frac{\mathbb{P}(x+S(n) \in A, \tau_x >n)}{\rho c^n n^{-p -\frac{d}{2}}e^{h\cdot x} V(\hat{x})
 \sum_{y \in A}  e^{-h\cdot y} V^{'}(\hat{y})}&\\
=\frac{\sum_{y \in \mathbb{N}^2 , |y| \leq R} n^{p+\frac{d}{2}} e^{h\cdot (x-y)} d^{(n)}(x,y) }{\rho e^{h\cdot x} V(\hat{x})
 \sum_{y \in A}  e^{-h\cdot y } V^{'}(\hat{y})} &+
\frac{\sum_{y \in A , |y| > R} n^{p+\frac{d}{2}} e^{ h\cdot (x-y) } d^{(n)}(x,y)}{\rho e^{h\cdot x} V(\hat{x})\sum_{y \in A}  e^{- h\cdot y} V^{'}(\hat{y})}
\end{align*}
Taking $\lim_{R\rightarrow \infty}\lim_{n\rightarrow \infty}$ in the first summand gives 1 due to \begin{equation*}\label{eq:assympdiscrete}
\mathbb{P}(\hat{x} + \hat{S}(n) = \hat{y}, \tau_{\hat{x}} >n) \sim \rho \frac{V(\hat{x})V'(\hat{y})}{n^{p+\frac{d}{2}}} \quad, \quad n \rightarrow \infty.  
\end{equation*} 
This result is contained in Theorem 6 in \cite{dw1}. Here $V$ is the harmonic function constructed in \cite{dw1} for the driftless random walk $\hat{S}(n)$, killed when leaving the cone $MK$ and $V'$ the respective harmonic function for $-\hat{S}(n)$. They both satisfy an estimate of the type $|V(x)|\leq C(1+|x|^p)$ (see Lemma 13 in \cite{dw1}). We now prove that 
\begin{equation}\label{eq:trtr}
\lim_{R\rightarrow \infty}\lim_{n\rightarrow \infty} \sum_{y \in A , |y| > R} n^{p+\frac{d}{2}} e^{ -h\cdot y } d^{(n)}(x,y) = 0.
\end{equation} 
This would finish the proof of Theorem \ref{thm:thm1}. Setting $m=[\frac{n}{2}]$ we have with Markov property
\begin{equation*}
\begin{split}
d^{(n)}(x,y) &= \sum_{z\in\mathbb{N}^2} d^{(m)}(x,z)d^{(n-m)}(z,y)\\
&= \sum_{z\in\mathbb{N}^2} d^{(m)}(x,z)f^{(n-m)}(y,z),
\end{split}
\end{equation*}
where in the second step, we have used time reversion\footnote{see also the similar calculation in Section 6 of \cite{dw1}}. Namely, we have 
\begin{align*}
&\mathbb{P}(\hat{x} + \hat{S}(n) =\hat{y}, \hat{\tau}_{\hat{x}} >n) = \sum_{z \in \mathbb{N}^2}\mathbb{P}(\hat{x} + \hat{S}(m) =\hat{z}, \hat{\tau}_{\hat{x}} >n)\mathbb{P}(\hat{z} + \hat{S}(n-m) =\hat{y}, \hat{\tau}_{\hat{x}} >n-m)\\
&=\sum_{z \in \mathbb{N}^2}\mathbb{P}(\hat{x} + \hat{S}(m) =\hat{z}, \hat{\tau}_{\hat{x}} >n)\mathbb{P}(\hat{y} - \hat{S}(n-m) =\hat{z}, \hat{\tau}_{\hat{x}} >n-m).
\end{align*} Now use Lemma 27 of Section 6 in \cite{dw1} on $d^{(m)}(x,z)$ to get
\begin{equation*}
d^{(n)}(x,y)\leq n^{-(\frac{p}{2}+\frac{d}{2})}C(x)\mathbb{P}(\hat{\tau}_{\hat{y}}'>n-m)
\end{equation*}
with $\hat{\tau}'_{y} $ the respective exit time for $-\hat{S}(n)$. Using Proposition \ref{prop:ndih} and recalling that $\hat{y}=My$ we get for a suitable $C'>0$
\begin{equation*}
d^{(n)}(x,y)\leq C(x) \frac{C'(1+|y|^p)}{n^{p+\frac{d}{2}}}.
\end{equation*}
If we take $U(x)=e^{h\cdot x}V(\hat{x})$ and $U'(x)=e^{-h\cdot x}V'(\hat{x})$ this establishes \eqref{eq:trtr} and the proof of Theorem \ref{thm:thm1} finished. 
\begin{remark}
As one can see from the proof, the fact that both the probabilities of the killed random walk and the tail probability of the exit time from the cone have the same algebraic order in the  asymptotics is due to the limit in \eqref{eq:trtr} being zero. For cones not fulfilling Assumption 5, we expect \eqref{eq:trtr} to be nonzero, i.e. the algebraic orders of the asymptotics of the probabilities and the exit time to be different. An example could be constructed taking random walks in dimension $d\geq 2$, which fulfill the conditions mentioned in the discussion after Assumption 5, killed when leaving suitable subspaces of $\mathbb{R}^d$. For finite $A$, Theorem \ref{thm:thm1} yields then the algebraic order $c^nn^{-(p+\frac{d}{2})}$ for the asymptotics of the probabilities, but the algebraic order for the tail asymptotics of the exit time is $c^nn^{-\frac{3}{2}}$, as readily follows by the discussion after Assumption 5.
\end{remark}
We can proceed now with corollaries from this result.

\subsection{Weak convergence results from Theorem \ref{thm:thm1}}

\begin{proof}[Proof of Corollary \ref{cor:corollary2}]
It follows easily from \eqref{eq:assneg} that 
\begin{equation}\label{eq:corn}
\frac{\mathbb{P}_x(\tau_x =n)}{\mathbb{P}_x(\tau_x > n)}  \longrightarrow \frac{1-c}{c} \quad , \qquad n\rightarrow \infty.
\end{equation}
The result is now immediate.
\end{proof}
Another trivial remark following from \eqref{eq:assneg} is 
\begin{remk}\label{remk:njesh}
$\mathbb{E}[\exp(\delta \tau_x)] < \infty $ for $\delta \leq -\ln{c}$ and $+\infty$ otherwise.
\end{remk}
This is also known from other results about quasistationarity, see for example \cite{tweedie} or \cite{collet}.
\newline Corollary \ref{cor:corollary2} helps us establish the proof of Proposition \ref{prop:exit}:
\begin{proof}[Proof of Proposition \ref{prop:exit}]
The proof follows closely the one of Theorem \ref{thm:thm1}. We have with Markov property
\begin{equation*}
\mathbb{P}(x + S(\tau_x) = y , \tau_x =n) = \sum_{z\in K \cap \mathbb{Z}^d} q^{(n-1)}(x,z)\mathbb{P}(z+S(1)=y).
\end{equation*}
It follows, that
\begin{equation*}
\begin{split}
&\frac{\mathbb{P}(x + S(\tau_x) = y , \tau_x =n)}{\rho c^{n-1} (n-1)^{-p-\frac{d}{2}}e^{h\cdot x}V(\hat{x})\sum_{z \in  K\cap\mathbb{Z}^d} e^{-h\cdot z} V'(\hat{z}) \mathbb{P}(z+S(1)=y)} = \\ &
\frac{\sum_{z\in \mathbb{N}^2, |z|\leq R}\frac{(n-1)^{p+\frac{d}{2}}}{c^{n-1}}q^{(n-1)}(x,z)\mathbb{P}(z+S(1)=y)}{V(\hat{x})e^{h\cdot x}\sum_{z\in K\cap{\mathbb{Z}^d}} e^{-h\cdot z} V'(\hat{z}) \mathbb{P}(z+S(1)=y)} \quad+
\\&\frac{\sum_{z\in K\cap{\mathbb{Z}^d}, |z|>R}\frac{(n-1)^{p+\frac{d}{2}}}{c^{n-1}}q^{(n-1)}(x,z)\mathbb{P}(z+S(1)=y)}{V(\hat{x})e^{h\cdot x}\sum_{z\in K\cap{\mathbb{Z}^d}} e^{-h\cdot z} V'(\hat{z}) \mathbb{P}(z+S(1)=y)}
\end{split}
\end{equation*}
The first ratio goes to 1 under $\lim_{R\rightarrow \infty}\lim_{n\rightarrow \infty}$ and the second goes to zero by similar reasoning as in the proof of Theorem \ref{thm:thm1}. We thus have
\begin{equation*}
\mathbb{P}(x + S(\tau_x) = y , \tau_x =n) \sim \rho c^{n-1} (n-1)^{-p-\frac{d}{2}}e^{h\cdot x}V(\hat{x})\sum_{z\in K\cap{\mathbb{Z}^d}} e^{-h\cdot z} V'(\hat{z}) \mathbb{P}(z+S(1)=y).
\end{equation*}
Using \eqref{eq:corn} and \eqref{eq:assneg} we get the result.
\end{proof}
Again Theorem 1 and its proof yield an easy proof of Proposition \ref{prop:propdw}, which is similar to a result contained in Theorem 6 of \cite{dw1}
\begin{proof}[Proof of Proposition \ref{prop:propdw}] 
Markov property implies
\begin{align*}
&n^{p+\frac{d}{2}}\mathbb{P}(x + S([tn]) \in A|\tau_x >n, x + S(n) = z) \\&= n^{p+\frac{d}{2}} \sum_{y\in A} \frac{q^{([tn])}(x,y)q^{(n-[tn])}(y,z)}{q^{(n)}(x,z)}.
\end{align*} 
Due to finiteness of $A$ we can use Theorem 6 in \cite{dw1} and get the result.
\end{proof}
\subsection{Comments on quasistationarity}
Quasistationarity can be understood as the long-term behavior of the process conditioned on survival, when it is known that the process will be killed a.s. in the distant future. It has many applications in more practical sciences in modeling phenomena, which will one day come to an end, but so late in time that the behavior until extinction is important to study.
\newline \indent It is well-known that for the one-dimensional problem stated at the beginning of the paper, the Yaglom limit exists. 
Also well-known from the quasistationarity literature (see \cite{collet} or \cite{meleard} for a proof) is the fact that its existence implies the existence of a quasistationary distribution (QSD) for the killed process. I.e. in our case $\mu$ is quasistationary for the random walk killed when leaving the cone. This trivially establishes the second statement in Corollary \ref{cor:yaglom}.\newline\indent
Here for completeness we give another proof of this fact by using the properties of our special setting, namely the $harmonicity$ of $V'$ for its respective driftless killed random walk. Indeed, this property combined with a time inversion implies the following:
\begin{align*}
\mathbb{P}_{\mu}(S(1) =y, \tau > 1) &= \sum_{x \in K\cap\mathbb{Z}^d} \mu(x)\mathbb{P}_x(S(1) =y) = \sum_{x \in K\cap\mathbb{Z}^d}  \kappa e^{-h\cdot x} V^{'}(\hat{x}))\mathbb{P}(x+S(1) =y)\\
& = c\kappa e^{-h\cdot y}\sum_{x \in K\cap\mathbb{Z}^d}  V^{'}(\hat{x})\mathbb{P}(\hat{y} - \hat{S}(1) =\hat{x})\\
&= c\kappa e^{-h\cdot y }  V^{'}(\hat{y}) \\
&= c \mu(y).
\end{align*}
Summation over $y\in K$ yields
\begin{equation*}
\mathbb{P}_{\mu}(\tau > 1) = c,
\end{equation*}
and from this quasistationarity immediately follows.
\newline\indent
An interesting question is whether there is a unique QSD and if not, how many there are. In the one dimensional case, it is well-known that for some typical one dimensional random walks in $\mathbb{N}$ with negative drift there are uncountably many QSD-s (see \cite{ferrari} for more details). This implies the following: if we have a two dimensional random walk with independent components, such that each component has a QSD, then we automatically have a QSD for the two dimensional walk. Indeed, let for example $S_1(n)$ and $S_2(n)$ be two one dimensional random walks in $\mathbb{Z}$ with negative drift, killed when leaving $\mathbb{N}$. Let these have respectively $\mu_1$ and $\mu_2$ as QSD. Then $\mu = \mu_1 \times \mu_2$ is a QSD for the two dimensional random walk $\tilde{S}(n) = (\tilde{S_1}(n),\tilde{S_2}(n))$, which has indipendent components and marginal distributions as those of $S_1$, resp. $S_2$, killed when it leaves $\mathbb{N}^2$. We omit the easy calculation needed to show this. Moreover, a condition of fast return from infinity as required in \cite{martinmartinvillemonais}, which is sufficient in the one-dimensional case for establishing uniqueness of the quasistationary distribution, is not fulfilled in our setting. Therefore we can conjecture that typical random walks on $\mathbb{Z}^d$ with nonzero drift, killed when leaving a cone $K$ have uncountably many QSD-s.
\newline \indent We finish by defining a variant of the \emph{process conditioned to never leave the cone}. See subsection \ref{subsec:1.2} for the idea of the construction.
\subsection{A random walk conditioned to never leave the cone}
\begin{proof}[Proof of Proposition \ref{prop:z}]
We can use the Markov property and \eqref{eq:assneg} to get for $n\rightarrow \infty$
\begin{equation*}
\begin{split}
&\mathbb{P}(x+S(1)=y_1,x+S(2)=y_2,\dots,x+S(k)=y_k,\tau_x>n|\tau_x>n) \\ &\longrightarrow \frac{1}{c^k}q(y_1,y_2)\dots q(y_{k-1},y_{k})e^{h\cdot (y_k-x)}\frac{V(\hat{y}_k)}{V(\hat{x})}.
\end{split}
\end{equation*}
From this, we see that the stochasticity of the Markov chain is equivalent to 
\begin{equation}
ce^{h\cdot x} V(\hat{x}) = \sum_{y\in K \cap\mathbb{Z}^d} q(x,y)e^{h\cdot y}V(\hat{y}).
\end{equation}
This is just $c$-harmonicity of $U$.
Inserting $q(x,y)$ we get after some cancelling
\begin{equation*}
1= \sum_{y\in K \cap\mathbb{Z}^d} p(x,y) \Longleftrightarrow \sum_{y\in K \cap\mathbb{Z}^d} d(x,y)V(\hat{y}) = V(\hat{x})
\end{equation*}
This is the harmonicity of $V$ w.r.t. $\hat{S}(n)$, killed when it leaves the positive quadrant. Therefore, stochasticity follows. The $n$-th power of the transition matrix of $Z$ is
\begin{equation}
p^{(n)}(x,y) = \frac{1}{c^n}q^{(n)}(x,y)e^{h\cdot(y-x)}\frac{V(\hat{y})}{V(\hat{x})}.
\end{equation}
Therefore, using again Theorem 6 in \cite{dw1}, we see that
\begin{equation}
p^{(n)}(x,y) \sim \rho \frac{V(\hat{y})V'(\hat{y})}{n^{p+1}}.
\end{equation} 
Since $p\geq 1$, the Green function of the Markov chain is always finite. Transience follows. 
\end{proof}

\textbf{Acknowledgment}.  I am thankful to Dr. Vitali Wachtel and to an anonymous referee for their many useful comments.

\end{document}